\numberwithin{equation}{section} 
\newtheorem{lemma}{Lemma}[section]
\newtheorem{corollary}[lemma]{Corollary}
\newtheorem{proposition}[lemma]{Proposition}
\newtheorem{theorem}[lemma]{Theorem}
\theoremstyle{definition}
\newtheorem{definition}[lemma]{Definition}
\newtheorem{remark}[lemma]{Remark}
\newlist{thm_enum}{enumerate}{1}
\setlist[thm_enum]{label=\normalfont(\alph*)}
\newlist{def_enum}{enumerate}{1}
\setlist[def_enum]{label=\normalfont(\roman*)}
\newcommand{\IZ}{\mathbb{Z}}
\newcommand{\IN}{\mathbb{N}}
\newcommand{\IR}{\mathbb{R}}
\newcommand{\IC}{\mathbb{C}}
\renewcommand{\epsilon}{\varepsilon}
\renewcommand{\phi}{\varphi}
\newcommand{\abs}[1]{\left\lvert#1\right\rvert}
\newcommand{\biggabs}[1]{\biggl\lvert#1\biggr\rvert}
\newcommand{\norm}[1]{\left\lVert#1\right\rVert}
\newcommand{\biggnorm}[1]{\biggl\lVert#1\biggr\rVert}
\newcommand{\R}[2][\empty]{
	\ifthenelse{\equal{#1}{\empty}}
		{\mathcal{R}\left\{#2\right\}}
		{\mathcal{R}_{#1}\left\{#2\right\}}
}
\DeclareMathOperator{\linspan}{span}
\DeclareMathOperator{\Id}{Id}
\DeclareMathOperator{\sign}{sign}
\DeclareMathOperator{\Rad}{Rad}
\begin{document}

\title[Kalton-Lancien Revisited: Maximal Regularity does not extrapolate]{The Kalton-Lancien Theorem Revisited: Maximal Regularity does not extrapolate}

\begin{abstract}
	We give a new more explicit proof of a result by Kalton and Lancien stating that on each Banach space with an unconditional basis not isomorphic to a Hilbert space there exists a generator $A$ of a holomorphic semigroup which does not have maximal regularity. In particular, we show that there always exists a Schauder basis $(f_m)$ such that $A$ can be chosen of the form $A(\sum_{m=1}^{\infty} a_m f_m) = \sum_{m=1}^{\infty} 2^m a_m f_m$. Moreover, we show that maximal regularity does not extrapolate: we construct consistent holomorphic semigroups $(T_p(t))_{t \ge 0}$ on $L^p(\IR)$ for $p \in (1, \infty)$ which have maximal regularity if and only if $p = 2$. These assertions were both open problems. Our approach is completely different than the one of Kalton and Lancien. We use the characterization of maximal regularity by $\mathcal{R}$-sectoriality for our construction.
\end{abstract}

\author{Stephan Fackler}
\address{Institute of Applied Analysis, University of Ulm, Helmholtzstr. 18, 89069 Ulm}
\email{stephan.fackler@uni-ulm.de}
\thanks{The author was supported by a scholarship of the ``Landesgraduiertenförderung Baden-Württemberg''}
\thanks{I would like to thank the anonymous reviewer for pointing out an error in Section~\ref{section:l_p} in the first version of the manuscript and Jochen Glück for showing me how to streamline the proof of Lemma~\ref{lem:technical_lemma}.}
\keywords{maximal regularity property, counterexamples to maximal regularity, $\mathcal{R}$-analyticity, $\mathcal{R}$-Schauder basis, Schauder multiplier}
\subjclass[2010]{Primary 35K90, 47D06; Secondary 46B15}

\maketitle

\section{Introduction}

The generator $-A$ of a strongly continuous semigroup $(T(t))_{t \ge 0}$ on a Banach space $X$ is said to have $(p,T)$-\emph{maximal regularity} ($T>0, 1 < p < \infty$) if for all $f \in L^p((0,T);X)$ the mild solution $u(t) = \int_0^t T(t-s) f(s) \, ds$ of the inhomogeneous abstract Cauchy problem
	\[ \begin{cases} 
			\dot{u}(t) + Au(t) & = f(t) \\
			u(0) & = 0.
		\end{cases}
	\]
satisfies $u \in W^{1,p}((0,T);X) \cap L^p((0,T); D(A))$. One can show that this property is independent of $p \in (1,\infty)$ and $T \in (0, \infty)$. Therefore one simply speaks of \emph{maximal regularity}. 

The property of maximal regularity has attracted the attention of many mathematicians as it is a tool to solve non-linear partial differential equations by means of linearization and applying a fixed point theorem. It was known for a long time that for $-A$ to have maximal regularity it is necessary that $-A$ generates a holomorphic semigroup~~\cite[Theorem~2.2]{Dor93}. Conversely, it is known that all generators of holomorphic semigroups on a Hilbert space have maximal regularity by a result of L. de Simon~\cite[Lemma 3,1]{Sim64}. Moreover, one can show maximal regularity for large classes of differential operators on $L^p$-spaces, e.g. generators of holomorphic semigroups satisfying Gaussian bounds. This leads H. Brézis to the question whether all generators of holomorphic semigroups on $L^p$ ($1 < p < \infty$) have maximal regularity.

\begin{definition} A complex Banach space $X$ has the \emph{maximal regularity property (MRP)} if every generator of a holomorphic semigroup on $X$ has maximal regularity.
\end{definition}

Besides Hilbert spaces $L^{\infty}([0,1])$ has (MRP) for the simple reason that every generator of a strongly continuous semigroup is already bounded by a famous result of H.P.~Lotz~\cite[Theorem~3]{Lot85}. In contrast, N.J. Kalton and G. Lancien gave the following negative answer to the maximal regularity problem in~\cite[Theorem~3.3]{KalLan00} (extensions for UMD-spaces with finite-dimensional Schauder decompositions can be found in~\cite[Theorem~3.4]{KalLan02}).

\begin{theorem}[N.J. Kalton, G. Lancien] Let $X$ be a complex Banach space with an unconditional basis that has (MRP). Then $X$ is isomorphic to $\ell^2$.
\end{theorem}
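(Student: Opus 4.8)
The plan is to prove the contrapositive: assuming $X$ carries an unconditional basis $(e_m)$ and is \emph{not} isomorphic to $\ell^2$, I would construct a single generator of a bounded holomorphic semigroup that fails maximal regularity, which already contradicts (MRP). The entry point is the necessary condition for maximal regularity valid in every Banach space: if $-A$ has maximal regularity, then the resolvent family $\{it(it+A)^{-1} : t \in \IR\setminus\{0\}\}$ is $\mathcal{R}$-bounded. Hence it suffices to exhibit a positive diagonal operator $Ae_m = \lambda_m e_m$ with $\lambda_m > 0$ --- which automatically generates a bounded holomorphic semigroup, since unconditionality makes every uniformly bounded diagonal multiplier (in particular $e^{-tA}$ and the sectorial resolvents) uniformly bounded --- whose imaginary-axis resolvent family is \emph{not} $\mathcal{R}$-bounded.

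To locate the obstruction I would invoke Kwapień's classical theorem: a Banach space is isomorphic to $\ell^2$ exactly when it has both type $2$ and cotype $2$. Since $X$ is not isomorphic to $\ell^2$, it must fail type $2$ or fail cotype $2$. In either case the failure is witnessed on the unconditional basis by finite families of diagonal contractions whose $\mathcal{R}$-bounds are unbounded. More precisely, I would show that for a space with unconditional basis the set of all diagonal operators of norm $\le 1$ is $\mathcal{R}$-bounded if and only if $X \cong \ell^2$; the non-trivial implication extracts the type-$2$ and cotype-$2$ inequalities from such an $\mathcal{R}$-bound by testing it on basis blocks. This produces, on disjoint coordinate blocks $B_1, B_2, \dots$, diagonal contractions $D_j$ supported on $B_j$ such that the $\mathcal{R}$-bound of $\{D_j\}$ tends to infinity.

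The heart of the construction is to encode these obstructing multipliers into the single resolvent family of a geometrically spaced diagonal operator. Choosing $\lambda_m = 2^m$ (after regrouping the $(e_m)$ into the Schauder basis $(f_m)$ that lists the blocks $B_j$ in increasing order), the resolvent $it(it+A)^{-1}$ has diagonal entries $\tfrac{it}{it+2^m}$, which for $t = 2^k$ tend to $1$ when $m \ll k$ and to $0$ when $m \gg k$. Thus $i2^k(i2^k+A)^{-1}$ approximates the band projection onto $\{m \lesssim k\}$, and telescoping differences of these operators approximate the block projections $P_{B_j}$. Using the permanence properties of $\mathcal{R}$-bounds --- stability under sums, compositions, passage to the absolutely convex hull and to the strong closure --- I would build the $D_j$ (up to $\mathcal{R}$-controlled errors) out of the family $\{it(it+A)^{-1}\}$. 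Consequently $\mathcal{R}$-boundedness of the resolvent family would force $\mathcal{R}$-boundedness of $\{D_j\}$, contradicting the previous paragraph; so $A$ is not $\mathcal{R}$-sectorial and $-A$ has no maximal regularity.

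I expect the main obstacle to be this last encoding step, namely showing that the $\mathcal{R}$-hull of the geometric resolvent family genuinely reproduces the prescribed block multipliers. The difficulty is the smooth transition of $u \mapsto \tfrac{iu}{iu+1}$ across the spectrum: near $m \approx k$ the entries are neither $0$ nor $1$, so the band projections are only approximate. I would control this by placing whole blocks $B_j$ at a common eigenvalue $2^{k_j}$ with large gaps $k_{j+1}-k_j$, so that each transition region meets only finitely many blocks, and then absorbing the residual entries into $\mathcal{R}$-bounded error terms via the permanence properties. A secondary, more routine point is to confirm that $A$ really generates a bounded holomorphic semigroup on $X$ and that the necessary condition for maximal regularity is available in the required generality.
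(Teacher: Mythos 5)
Your proposal founders on its key lemma, namely the claim that for a space with an unconditional basis the set of all diagonal operators of norm $\le 1$ is $\mathcal{R}$-bounded if and only if $X \cong \ell^2$. This is false. Whenever the unconditional basis induces a lattice structure of finite cotype --- which covers every $\ell^p$ and $L^p$ with $1 \le p < \infty$, and in fact every UMD space --- the Khintchine--Maurey square function equivalence gives $\int_0^1 \bignorm{\sum_k r_k(\omega) x_k}\, d\omega \simeq \bignorm{\bigl(\sum_k \abs{x_k}^2\bigr)^{1/2}}$, where the square function is computed coordinatewise. Since a diagonal contraction decreases each coordinate, $\bigl(\sum_k \abs{(D_k x_k)(m)}^2\bigr)^{1/2} \le \bigl(\sum_k \abs{x_k(m)}^2\bigr)^{1/2}$ for every $m$, so by unconditionality the \emph{entire} set of diagonal contractions is $\mathcal{R}$-bounded on such spaces. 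In particular, failure of type $2$ or cotype $2$ (Kwapie\'n) is not witnessed by diagonal contractions: $\ell^1$ fails type $2$, yet its diagonal contractions form an $\mathcal{R}$-bounded family. Consequently your operator $A$, which is diagonal with respect to $(e_m)$ (regrouping or reordering an unconditional basis keeps it unconditional), can never violate the necessary condition: its resolvents $it(it+A)^{-1}$ are diagonal with entries $it/(it+\lambda_m)$ of modulus at most $1$, hence $\mathcal{R}$-bounded by the above; on UMD spaces $-A$ even \emph{has} maximal regularity --- this is exactly Theorem~\ref{thm:unconditionality_implies_maxreg} of the paper, proved via the $H^{\infty}$-calculus. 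So your contrapositive construction cannot produce a counterexample on precisely the spaces that motivated the problem, such as $L^p([0,1])$ for $p \neq 2$.

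This is the phenomenon the paper is organized around: a Schauder-multiplier counterexample is only possible when the multiplier acts diagonally with respect to a \emph{conditional} basis. The paper's proof therefore first produces a non-symmetric normalized unconditional basis (Lindenstrauss--Zippin; this is possible unless $X \simeq c_0$, $\ell^1$ or $\ell^2$), uses Singer's construction to glue two non-equivalent unconditional basic sequences $(e_{2m-1})$ and $(e_{\pi(2m)})$ into a conditional basis $(f_m)$, places the multiplier sequence $(2^m)$ on \emph{that} basis, and detects the failure of $\mathcal{R}$-boundedness of the semigroup on $[0,1]$ by transference to $\Rad(X)$, with $c_0$ and $\ell^1$ handled by separate explicit constructions. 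A secondary defect of your plan, independent of the false lemma: if you place whole blocks $B_j$ at a common eigenvalue $2^{k_j}$, then every operator obtainable from $\{it(it+A)^{-1}\}$ by sums, compositions, convex hulls and strong limits is diagonal with entries \emph{constant on each block}, i.e.\ a combination of block projections; such operators can never reproduce arbitrary within-block contractions $D_j$, so even the encoding step would fail on its own terms.
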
 

For $p \in (1,\infty)$ the Haar basis is an unconditional basis for $L^p([0,1])$. Hence for $p \neq 2$ by Kalton-Lancien's result, there exists a generator of a holomorphic semigroup on $L^p$ not having maximal regularity. However, Kalton-Lancien's approach  only yields the pure existence of such a counterexample. Moreover, on the basis of their arguments it seems impossible to write down a counterexample explicitly. In particular in this situation, the following questions remain open:

\begin{itemize}
	\item Does there always exist a counterexample for which the generator is given as a Schauder multiplier?
	\item Does maximal regularity extrapolate? By this we mean the following question: let $(T_2(t))_{t \ge 0}$ and $(T_p(t))_{t \ge 0}$ be consistent semigroups on $L^2$ and $L^p$ ($p \in (1,\infty) \setminus \{2\}$) such that $(T_2(t))_{t \ge 0}$ is holomorphic. Does $(T_p(t))_{t \ge 0}$ have maximal regularity automatically (cf.~\cite[7.2.2]{Are04})?
\end{itemize}

We will give answers to these questions. Before we make some comments. Kalton and Lancien's proof suggests naively that a counterexample could be given as a Schauder multiplier with respect to the given unconditional basis. However, we will recall in Theorem~\ref{thm:unconditionality_implies_maxreg} that the unconditionality of the basis forces semigroups generated by Schauder multipliers on large classes of Banach spaces, in particular $L^p$-spaces with $p \in (1, \infty)$, to have maximal regularity. So a counterexample can only be found if the basis is conditional.

In this paper we will give a new proof of Kalton-Lancien's result which in particular brings light to this phenomenon. We will use the unconditional basis to construct systematically a conditional one which then can be used to write down an explicit counterexample.

Secondly, Kalton-Lancien's method only yields a counterexample for each single Banach space with an unconditional basis. In particular, it is not clear whether such a counterexample on an $L^p$-space for $p \in (1, \infty) \setminus \{2\}$ extends to a semigroup on $L^2$. Therefore one can ask whether, given a holomorphic semigroup on $L^2$ (which has maximal regularity by the result of Simon) which extrapolates to semigroups on $L^p$ for $p \in (1, \infty)$, maximal regularity always extrapolates to those $p$ as well. The Stein interpolation theorem says that this question has a positive answer if maximal regularity is replaced by the weaker property of holomorphy. In this paper we will give a negative answer to this question. Indeed, our method is explicit enough to construct consistent holomorphic semigroups on $L^p$ for $1 < p < \infty$ such that maximal regularity is violated for $p \neq 2$. Under additional assumptions, this cannot happen. For example, if the semigroup satisfies Gaussian estimates and is holomorphic in $L^2$, then maximal regularity extrapolates to all $p \in (1, \infty)$~\cite[Theorem~3.1]{HiePru97}.

Our alternative approach uses the characterization of maximal regularity by the Rademacher boundedness of the semigroup in a sector. We now introduce the necessary terminology. Let $X$ be a Banach space. A family of operators $\mathcal{T} \subset \mathcal{L}(X)$ is called \emph{$\mathcal{R}_p$-bounded} if there exists a constant $C_p > 0$ such that for all $n \in \IN$, all $T_1, \ldots, T_n \in \mathcal{T}$ and all $x_1, \ldots, x_n \in X$
	\begin{equation} \biggl( \int_0^1 \biggnorm{\sum_{k=1}^n r_k(\omega) T_kx_k}^p \, d\omega \biggr)^{1/p} \le C_p \biggl( \int_0^1 \biggnorm{\sum_{k=1}^n r_k(\omega) x_k}^p d\omega \biggr)^{1/p}, \label{eq:rademacher_bounded} \end{equation}
where $r_k(\omega) \coloneqq \sign \sin(2^k \pi \omega)$ denotes the \emph{$k$-th Rademacher function}. The closure of the finite Rademacher sums (as in \eqref{eq:rademacher_bounded}) in $L^p([0,1]; X)$ is denoted by $\Rad_p(X)$. One can show that the norms on $\Rad_p(X)$ are equivalent (the Kahane-Khintchine inequality) for all $p \in [1,\infty)$ and consequently one  writes simply $\Rad(X)$.
Hence, $\mathcal{R}_p$-boundedness holds for some $p \in [1,\infty)$ if and only if it holds for all $p$. We therefore speak simply of \emph{$\mathcal{R}$-boundedness}. The smallest constant such that~\eqref{eq:rademacher_bounded} holds is denoted by $\mathcal{R}_p(\mathcal{T})$ and one often omits the subindex if one is merely interested in the finiteness of the constant. For a detailed exposition of maximal regularity and $\mathcal{R}$-boundedness see~\cite{KunWei04} and \cite{DHP03}. We can now give the characterization of maximal regularity. For this let $\Sigma_{\delta} \coloneqq \{ z \in \IC \setminus \{0\}: \abs{\arg z} < \delta \}$ denote the sector of angle $\delta$ in the complex plane.

\begin{definition}[$\mathcal{R}$-analyticity] Let $X$ be a complex Banach space and $-A$ the generator of a holomorphic $C_0$-semigroup $(T(z))_{z \in \Sigma_{\tilde{\delta}}}$ on $X$. $(T(z))_{z \in \Sigma_{\tilde{\delta}}}$ is called \emph{$\mathcal{R}$-analytic} if there exists a $\tilde{\delta} > \delta > 0$ such that
	\[ \R{T(z): z \in \Sigma_\delta, \abs{z} \le 1} < \infty. \]
\end{definition}

\begin{theorem} Let $X$ be a complex Banach space and $-A$ the generator of a holomorphic $C_0$-semigroup $(T(z))_{z \in \Sigma}$ on $X$. Then the following hold:
	\begin{thm_enum}
		\item\label{thm:r-maxreg_implies_r-analyticity} If $-A$ has maximal regularity, then $(T(z))_{z \in \Sigma}$ is $\mathcal{R}$-analytic.
		\item\label{thm:r-analyticity_implies_maxreg} Conversely, on a UMD-space $X$, $\mathcal{R}$-analyticity of $(T(z))_{z \in \Sigma}$ implies maximal regularity.	
	\end{thm_enum}
\end{theorem}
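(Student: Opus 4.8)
The plan is to route both implications through the notion of \emph{$\mathcal{R}$-sectoriality}: recall that $A$ is $\mathcal{R}$-sectorial of angle $\theta$ if $\{\lambda(\lambda + A)^{-1} : \lambda \in \Sigma_{\pi - \theta}\}$ is $\mathcal{R}$-bounded. Maximal regularity of $-A$ is, by Weis's theorem, equivalent to $\mathcal{R}$-sectoriality of some angle $\theta < \pi/2$ on a UMD space, and the two halves of the statement then split cleanly: the equivalence of $\mathcal{R}$-analyticity of the semigroup with $\mathcal{R}$-sectoriality of angle $< \pi/2$ holds on an arbitrary Banach space, whereas the passage from $\mathcal{R}$-sectoriality to maximal regularity is where the UMD hypothesis enters. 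Throughout I would rely on the standard permanence properties of $\mathcal{R}$-boundedness: an $\mathcal{R}$-bounded set has $\mathcal{R}$-bounded absolutely convex hull and strong closure, and averaging an $\mathcal{R}$-bounded family $\{S(u) : u \in U\}$ against a finite measure, $\int_U S(u) \, d\mu(u)$, again produces an $\mathcal{R}$-bounded family. These are what allow one to move freely between the resolvent picture and the semigroup picture.

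For part~\textit{(a)} I would first show, on a general Banach space, that maximal regularity forces $\{t A T(t) : t > 0\}$ to be $\mathcal{R}$-bounded. This is the Weis argument: test the bounded solution operator $f \mapsto A \int_0^{\cdot} T(\cdot - s) f(s) \, ds$ on $L^p(\IR_+; X)$ against step functions $f = \sum_k r_k x_k \mathds{1}_{I_k}$ assembled from Rademacher functions on a dyadic partition, and read off~\eqref{eq:rademacher_bounded} from the resulting inequality. Since analyticity gives the Cauchy representation of $AT(z)$ by $AT(\zeta)$ over a circle of radius comparable to $|z|$, the averaging property upgrades $\mathcal{R}$-boundedness of $\{tAT(t) : t > 0\}$ to $\mathcal{R}$-boundedness of $\{z A T(z) : z \in \Sigma_\delta\}$ for some $\delta > 0$. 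Restricting to $|z| \le 1$ this is exactly $\mathcal{R}$-analyticity, and via the differentiated Laplace representation $(\lambda + A)^{-1} = \int_0^\infty e^{-\lambda t} T(t) \, dt$ it also yields $\mathcal{R}$-sectoriality of angle $< \pi/2$.

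For part~\textit{(b)} I would run the same contour arguments in reverse to recover $\mathcal{R}$-sectoriality of angle $< \pi/2$ from $\mathcal{R}$-analyticity: rotate the Laplace contour into the sector $\Sigma_\delta$ on which $\{T(z) : z \in \Sigma_\delta, |z| \le 1\}$ is $\mathcal{R}$-bounded, split the integral at $|z| = 1$, and bound each piece with the averaging property, the tail $|z| > 1$ being controlled by the uniform boundedness of the semigroup. Once $A$ is $\mathcal{R}$-sectorial of angle $< \pi/2$, the imaginary axis (minus the origin) lies inside the sector $\Sigma_{\pi - \theta}$, so the symbol $m(\xi) = i\xi(i\xi + A)^{-1}$ is well defined, and both $\{m(\xi) : \xi \neq 0\}$ and $\{\xi m'(\xi) : \xi \neq 0\}$ are $\mathcal{R}$-bounded (the latter from the $\mathcal{R}$-boundedness of $\{(\lambda(\lambda+A)^{-1})^2\}$, a derivative of the resolvent). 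This is precisely the operator-valued Mikhlin condition, so Weis's Fourier multiplier theorem on UMD spaces makes the associated multiplier bounded on $L^p(\IR; X)$; since that multiplier is the maximal regularity operator after Fourier transform, maximal regularity follows.

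The hard part will be part~\textit{(b)}, and specifically the multiplier step: verifying the Mikhlin condition for $m$ from $\mathcal{R}$-sectoriality and invoking the operator-valued multiplier theorem is exactly where the UMD (Hilbert transform) hypothesis is indispensable and where the heavy machinery sits. The contour-rotation equivalence between $\mathcal{R}$-analyticity and $\mathcal{R}$-sectoriality is technically delicate, mainly because of the large-$|z|$ (equivalently small-$|\lambda|$) behaviour, but it is otherwise bookkeeping with the averaging property. Given the length and standard nature of these ingredients, I would in practice cite Weis's characterization and the multiplier theorem and only indicate the two contour computations linking $\mathcal{R}$-analyticity to $\mathcal{R}$-sectoriality.
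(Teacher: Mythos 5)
The paper does not actually prove this theorem: it quotes part~(a) from Cl\'ement--Pr\"uss and part~(b) from Weis, pointing to \cite{KunWei04} for the proofs, and your outline is essentially a reconstruction of those standard proofs (Rademacher step-function testing for (a), the operator-valued Mikhlin/Weis multiplier theorem for (b)). So the architecture is the right one, but two of your intermediate steps fail as written. The first is in part~(a): you cannot upgrade $\mathcal{R}$-boundedness of $\{tAT(t) : t > 0\}$ to $\mathcal{R}$-boundedness of $\{zAT(z) : z \in \Sigma_\delta\}$ by ``the Cauchy representation of $AT(z)$ by $AT(\zeta)$ over a circle of radius comparable to $\abs{z}$.'' Any circle (or contour) around a non-real $z$ consists of non-real points $\zeta$, and the averaging permanence property only applies when the integrand family $\{AT(\zeta) : \zeta \in \Gamma\}$ is already known to be $\mathcal{R}$-bounded --- which is precisely what you are trying to prove; the argument is circular. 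The standard repair is a Taylor expansion about \emph{real} points: for $\abs{z-t} \le \epsilon t$ one has $T(z) = \sum_{n \ge 0} \frac{(z-t)^n}{n!}(-A)^nT(t)$, and since $(tA)^nT(t) = n^n \bigl( \tfrac{t}{n}AT(\tfrac{t}{n}) \bigr)^n$, submultiplicativity of $\mathcal{R}$-bounds gives $\mathcal{R}\bigl(\{(tA)^nT(t) : t>0\}\bigr) \le (nC)^n$; by Stirling the series of $\mathcal{R}$-bounds $\sum_n \epsilon^n (nC)^n/n!$ converges for $\epsilon < 1/(eC)$, and this covers a sector. (Alternatively, pass first to $\mathcal{R}$-bounds of the resolvent on $i\IR$ and extend by Neumann series, then represent $T(z)$ by the Dunford contour integral of the resolvent.)

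The second gap is in part~(b): the tail $\abs{z} > 1$ of the rotated Laplace integral is \emph{not} ``controlled by the uniform boundedness of the semigroup.'' Uniform norm bounds never yield $\mathcal{R}$-bounds --- the failure of exactly this implication on non-Hilbertian spaces is the engine of the whole paper --- and the averaging property needs the integrand family over the entire ray to be $\mathcal{R}$-bounded. The standard repair is a shift: maximal regularity on finite time intervals is invariant under replacing $A$ by $A + \omega$, and writing $z = nu$ with $\abs{u} \le 1$, so that $T(z) = T(u)^n$, submultiplicativity shows that $\{e^{-\omega z}T(z) : z \in \Sigma_\delta\}$ is $\mathcal{R}$-bounded once $\omega$ is large enough; one then runs the Laplace-transform argument for the shifted semigroup and feeds its $\mathcal{R}$-sectoriality of angle $< \pi/2$ into the Mikhlin step. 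With these two corrections your outline coincides with the proofs in the references the paper cites, and citing those references outright --- as you suggest at the end --- is exactly what the paper itself does.
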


Assertion~\ref{thm:r-maxreg_implies_r-analyticity} is due to P.~Clément and J.~Prüss~\cite[Proposition 1]{ClePru01} and assertion~\ref{thm:r-analyticity_implies_maxreg} is due to L.~Weis~\cite[Theorem~4.2]{Wei01}. A proof of both can be found in~\cite{KunWei04}. Notice that the UMD-property is only needed in the proof of part~\ref{thm:r-analyticity_implies_maxreg}. For the definition of and details on UMD-spaces see~\cite{Fra86} and \cite{Bur01}. We only remark that all $L^p$-spaces for $p \in (1, \infty)$ are UMD-spaces.

In particular, for constructing generators of holomorphic semigroups not having maximal regularity it is sufficient (independently of the UMD-property) to give examples of holomorphic semigroups which are not $\mathcal{R}$-analytic.

We outline very shortly our approach and the contents of this paper. We start by giving a short introduction into the theory of Schauder bases and explain how we can use them to construct generators of holomorphic semigroups. We then explain how we can associate to an $\mathcal{R}$-analytic semigroup a holomorphic semigroup on $\Rad(X)$, an idea which goes back to W. Arendt and S. Bu~\cite{AreBu03}. In this paper, we show that on Banach spaces with an unconditional basis we can construct semigroups out of Schauder multipliers which are holomorphic but whose associated semigroups on $\Rad(X)$ are not holomorphic, thereby giving our new proof of the Kalton-Lancien result. Before doing this in the general abstract case of a Banach space with an unconditional basis, we exemplify our approach for the spaces $c_0$ and $\ell^1$. Thereafter we will apply our abstract construction to give counterexamples on the $\ell^p$-spaces. For this one uses a non-standard representation of these spaces which is well-known in the geometry of Banach spaces. After embedding these counterexamples into $L^p(\IR)$, we can give a counterexample to the extrapolation problem for maximal regularity. In the last section we show that the same techniques can be used to construct systematically bases which are not $\mathcal{R}$-bases.

\section{Schauder multipliers as generators of holomorphic semigroups}

For the rest of the paper we will only consider infinite-dimensional and complex Banach spaces. In this section we give the necessary definitions from the theory of Schauder bases and show how to construct semigroups with the help of Schauder bases.  

\begin{definition}[Schauder basis] A sequence $(e_m)_{m \in \IN}$ in a Banach space $X$ is called a \emph{Schauder basis} if for each $x \in X$ there is a unique sequence of scalars $(a_m)_{m \in \IN}$ such that
	\begin{equation*} x = \sum_{m=1}^{\infty} a_m e_m. \end{equation*}
A sequence $(e_m)_{m \in \IN}$ is called a \emph{basic sequence} if it is a basis for the closed linear span of $(e_m)_{m \in \IN}$.
The functional $e_m^{*} \in X^*$ which maps $x$ to the unique $m$-th coefficient in the expansion of $x$ is called the \emph{$m$-th coordinate functional}.
\end{definition}

The mere concept of a Schauder basis is sometimes too general to be useful in practice, therefore one often considers special bases with additional properties. The most important example is that of an unconditional basis.

\begin{definition}[Unconditional basis] A Schauder basis $(e_m)_{m \in \IN}$ for a Banach space $X$ is called \emph{unconditional} if for each $x \in X$ the unique expansion $x = \sum_{m=1}^{\infty} a_m e_m$ converges unconditionally, i.e. $\sum_{m=1}^{\infty} a_{\pi(m)} e_{\pi(m)} = x$ for each permutation $\pi$ of the natural numbers.
\end{definition}

For unconditional bases one can show that the convergence of $\sum_{m=1}^{\infty} a_m e_m$ implies the convergence of $\sum_{m=1}^{\infty} b_m a_m e_m$ for all $(b_m)_{m \in \IN} \in \ell^{\infty}$. The closed graph theorem and the uniform boundedness principle show that there exists a smallest constant $K \ge 0$ such that
	\begin{equation*} \biggnorm{\sum_{m=1}^{\infty} b_m a_m e_m} \le K \biggnorm{\sum_{m=1}^{\infty} a_m e_m} \end{equation*}
holds for all $(a_m)_{m \in \IN}$ for which the expansion converges and all choices of sequences $(b_m)_{m \in \IN}$ with $\norm{(b_m)}_{\infty} \le 1$. This constant is called the \emph{unconditional constant} of $(e_m)_{m \in \IN}$.

It is now time to link Schauder bases with semigroup theory. 

\begin{definition}[Schauder Multiplier]\label{definition:schauder_multiplicator} Let $(e_m)_{m \in \IN}$ be a Schauder basis for a Banach space $X$. For a complex sequence $(\gamma_m)_{m \in \IN}$ the closed linear \emph{Schauder multiplier $A$ associated} to $(\gamma_m)$ is defined by
	\begin{align*}
		D(A) = \biggl\{ x = \sum_{m=1}^{\infty} a_m e_m: \sum_{m=1}^{\infty} \gamma_m a_m e_m \text{ exists} \biggr\} \\
		A \biggl( \sum_{m=1}^{\infty} a_m e_m \biggr) = \sum_{m=1}^{\infty} \gamma_m a_m e_m. 
	\end{align*}
\end{definition}

A nice feature of Schauder multipliers is that they can be used to construct systematically generators of semigroups. The proof of the following theorem can be found in~\cite[Theorem~3.2]{Ven93}.

\begin{theorem}\label{theorem:semigroup_generation} Let $(e_m)_{m \in \IN}$ be a basis for a Banach space $X$. Let $(\gamma_m)_{m \in \IN}$ be an increasing sequence of positive real numbers and let $A$ be the multiplier associated to $(\gamma_m)$. Then $-A$ generates a holomorphic $C_0$-semigroup $(T(z))_{z \in \Sigma_{\pi/2}}$ and $T(z)$ is the multiplier associated to the sequence $(e^{-z\gamma_m})_{m \in \IN}$. 
\end{theorem}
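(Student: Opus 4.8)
The plan is to verify directly that the multiplier $T(z)$ associated to $(e^{-z\gamma_m})_{m\in\IN}$ is the asserted semigroup and that its generator is $-A$. Write $P_n x = \sum_{m=1}^n e_m^*(x) e_m$ for the canonical partial-sum projections of the basis, and let $K = \sup_n \norm{P_n} < \infty$ be the basis constant. The heart of the argument is a summation-by-parts estimate: for any bounded scalar sequence $(b_m)$ and $x = \sum_m a_m e_m$ one has
\[ \sum_{m=1}^{n} b_m a_m e_m = \sum_{m=1}^{n-1}(b_m - b_{m+1}) P_m x + b_n P_n x, \]
so the multiplier associated to $(b_m)$ is bounded by $K\bigl(\sup_m\abs{b_m} + \sum_m \abs{b_{m+1}-b_m}\bigr)$.

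First I would apply this with $b_m = e^{-z\gamma_m}$. Since the $\gamma_m$ are positive, $\abs{b_m} = e^{-\Re(z)\gamma_m} \le 1$ for $\Re z > 0$; and since they are increasing, the intervals $[\gamma_m,\gamma_{m+1}]$ are disjoint, whence
\[ \sum_{m=1}^{\infty} \abs{e^{-z\gamma_{m+1}} - e^{-z\gamma_m}} \le \abs{z}\int_{\gamma_1}^{\infty} e^{-\Re(z)t}\,dt \le \frac{\abs{z}}{\Re z} = \frac{1}{\cos(\arg z)}. \]
On each subsector $\Sigma_\delta$ with $\delta < \pi/2$ the right-hand side is bounded by $(\cos\delta)^{-1}$, so $T(z)$ is well-defined and $\sup_{z\in\Sigma_\delta}\norm{T(z)} \le K(1 + (\cos\delta)^{-1}) < \infty$. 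This uniform boundedness on subsectors is precisely where both the monotonicity and the positivity of $(\gamma_m)$ enter, and I expect it to be the main point of the proof.

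With the boundedness in hand the remaining assertions follow by soft arguments. The semigroup law $T(z)T(w) = T(z+w)$ and $T(0)=\Id$ hold on each $e_m$ and extend by density and boundedness. Applying the same summation-by-parts bound to the tails $\sum_{m>n} e^{-z\gamma_m}a_m e_m$ and using that $\norm{P_m x - P_n x} \to 0$ shows that the holomorphic partial sums $z \mapsto \sum_{m=1}^n e^{-z\gamma_m}a_m e_m$ converge to $T(z)x$ uniformly on each $\Sigma_\delta$; hence $z\mapsto T(z)x$ is holomorphic on $\Sigma_{\pi/2}$ for every $x$, and the same density argument gives $\lim_{z\to 0}T(z)x = x$ within each subsector, so $(T(z))_{z\in\Sigma_{\pi/2}}$ is a bounded holomorphic $C_0$-semigroup.

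Finally I would identify the generator. The operator $A$ is densely defined, since it contains all finite linear combinations of the $e_m$, and it is closed because convergence in $X$ forces coordinatewise convergence through the continuous functionals $e_m^*$. For $\lambda > 0$ the multiplier $S(\lambda)$ associated to $((\lambda+\gamma_m)^{-1})_{m\in\IN}$ is bounded by the same variation estimate, maps $X$ into $D(A)$, and satisfies $S(\lambda)(\lambda+A)x = x$ on $D(A)$ and $(\lambda+A)S(\lambda) = \Id$; moreover computing $\int_0^\infty e^{-\lambda t}T(t)x\,dt$ coordinatewise yields $\sum_m (\lambda+\gamma_m)^{-1}a_m e_m = S(\lambda)x$. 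Thus $((\lambda+\gamma_m)^{-1})$ is the Laplace transform of the semigroup, so $S(\lambda) = (\lambda+A)^{-1}$ is the resolvent of the generator, which is therefore $-A$.
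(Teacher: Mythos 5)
Your proof is correct, and it is worth noting that the paper itself gives no proof of Theorem~\ref{theorem:semigroup_generation} at all --- it only cites Venni's result --- so your argument is in fact a self-contained substitute for that citation, and it follows exactly the classical route behind it: Abel summation shows that every scalar sequence of bounded variation induces a bounded Schauder multiplier with norm at most $K\bigl(\sup_m \abs{b_m} + \sum_m \abs{b_{m+1}-b_m}\bigr)$, the estimate $\sum_m \abs{e^{-z\gamma_{m+1}} - e^{-z\gamma_m}} \le \abs{z}/\Re z \le (\cos\delta)^{-1}$ (using positivity and monotonicity of $(\gamma_m)$) gives uniform boundedness of $T(z)$ on each subsector $\Sigma_\delta$, and the coordinatewise Laplace-transform computation identifies the generator as $-A$. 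The two steps you leave compressed --- that the series defining $T(z)x$ actually converges (your tail version of the summation-by-parts identity supplies this, since $\sup_{m>n}\norm{P_m x - P_n x} \to 0$), and that the sequence $\bigl(\gamma_m/(\lambda+\gamma_m)\bigr)_m$ is itself bounded and of bounded variation, so that $S(\lambda)$ indeed maps $X$ into $D(A)$ --- both follow from the same lemma, so nothing essential is missing.
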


The situation is much simpler if the basis $(e_m)_{m \in \IN}$ is unconditional. In this case a Schauder multiplier is bounded if and only if the associated sequence is in $\ell^{\infty}$. Hence, the multiplier associated to a sequence $(-\gamma_m)_{m \in \IN}$ generates a holomorphic semigroup if $\gamma_m \in \Sigma_{\delta}$ for all $m \in \IN$ for some $\delta < \frac{\pi}{2}$. In this case one always has:

\begin{theorem}\label{thm:unconditionality_implies_maxreg}
	Let $(e_m)_{m \in \IN}$ be an unconditional basis for some UMD-space $X$ and let $A$ be the multiplier associated to some sequence $(\gamma_m)_{m \in \IN}$ with $\gamma_m \in \Sigma_{\delta}$ for all $m \in \IN$ for some $\delta < \frac{\pi}{2}$. Then $-A$ has maximal regularity. 
\end{theorem}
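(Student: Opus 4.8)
The plan is to verify the sufficient condition for maximal regularity provided by Theorem~\ref{thm:r-analyticity_implies_maxreg}: since $X$ is a UMD-space, it suffices to show that the semigroup $(T(z))$ generated by $-A$ is $\mathcal{R}$-analytic. By Theorem~\ref{theorem:semigroup_generation} and the subsequent remarks on unconditional bases, $T(z)$ is the Schauder multiplier with symbol $(e^{-z\gamma_m})_m$, and $(T(z))$ is holomorphic and bounded on the sector $\Sigma_{\pi/2-\delta}$: for $\abs{\arg z}<\pi/2-\delta$ one has $\abs{\arg(z\gamma_m)}<\pi/2$, hence $\Re(z\gamma_m)\ge 0$ and $\abs{e^{-z\gamma_m}}\le 1$ for every $m$. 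Fixing any $0<\theta<\pi/2-\delta$, the whole matter thus reduces to establishing $\R{T(z): z\in\Sigma_{\theta},\ \abs{z}\le1}<\infty$; in fact I will prove the stronger statement that the family of \emph{all} multipliers whose symbol lies in the unit ball of $\ell^{\infty}$ is $\mathcal{R}$-bounded.

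The key structural observation is that an unconditional basis turns $X$ into a Banach lattice: declaring $\sum_m a_m e_m\ge 0$ iff $a_m\ge 0$ for all $m$ and setting $\abs{\sum_m a_m e_m}=\sum_m\abs{a_m}e_m$ defines, up to the equivalent norm controlled by the unconditional constant, a lattice structure on $X$ in which each multiplier acts coordinatewise, so that $\abs{M_b x}=\abs{b}\cdot\abs{x}$ pointwise. Moreover, being a UMD-space, $X$ has nontrivial type and hence finite cotype. The hard part, and the reason mere boundedness of the multipliers does not suffice, is that $\mathcal{R}$-boundedness couples the Rademacher index with the basis index; this two-parameter interaction is exactly what the lattice-plus-finite-cotype structure resolves through the Khintchine--Maurey inequality, which yields a constant $C$ with
\[ \frac{1}{C}\biggnorm{\Bigl(\sum_k\abs{x_k}^2\Bigr)^{1/2}}\le\Bigl(\E\biggnorm{\sum_k r_k x_k}^2\Bigr)^{1/2}\le C\biggnorm{\Bigl(\sum_k\abs{x_k}^2\Bigr)^{1/2}} \]
for every finite family $x_1,\dots,x_n\in X$.

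Granting this, the $\mathcal{R}$-bound is immediate: for multipliers $M_{b^{(1)}},\dots,M_{b^{(n)}}$ with $\norm{b^{(j)}}_{\infty}\le1$ and arbitrary $x_1,\dots,x_n$, the coordinatewise action gives $\abs{M_{b^{(j)}}x_j}=\abs{b^{(j)}}\abs{x_j}\le\abs{x_j}$ pointwise, whence by the upper Khintchine--Maurey estimate, lattice monotonicity and the lower estimate
\[ \Bigl(\E\biggnorm{\sum_j r_j M_{b^{(j)}}x_j}^2\Bigr)^{1/2}\le C\biggnorm{\Bigl(\sum_j\abs{x_j}^2\Bigr)^{1/2}}\le C^2\Bigl(\E\biggnorm{\sum_j r_j x_j}^2\Bigr)^{1/2}. \]
Applying this to $b^{(j)}=(e^{-z_j\gamma_m})_m$ shows $\R{T(z): z\in\Sigma_{\theta},\ \abs{z}\le1}\le C^2<\infty$, so $(T(z))$ is $\mathcal{R}$-analytic and Theorem~\ref{thm:r-analyticity_implies_maxreg} finishes the proof. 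The only points still requiring care are the lattice renorming and the verification that the UMD property forces finite cotype, so that the Khintchine--Maurey equivalence is indeed at our disposal; both are standard facts from Banach space geometry.
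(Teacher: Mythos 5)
Your proof is correct, but it takes a genuinely different route from the paper's. The paper never verifies $\mathcal{R}$-analyticity: it observes that for $f \in H_0^{\infty}(\Sigma_{\phi})$ with $\delta < \phi < \frac{\pi}{2}$ the operator $f(A)$ is again the Schauder multiplier with symbol $(f(\gamma_m))_{m \in \IN}$, so unconditionality gives $\norm{f(A)} \le K \norm{f}_{\infty}$ in one line; hence $A$ has a bounded $H^{\infty}(\Sigma_{\phi})$-calculus, and maximal regularity follows from the Kalton--Weis theorem (a bounded $H^{\infty}$-calculus implies maximal regularity on spaces with property $(\Delta)$, which UMD-spaces have). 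You instead stay inside the framework the introduction sets up: you verify $\mathcal{R}$-analyticity and invoke assertion~\ref{thm:r-analyticity_implies_maxreg} of Weis. The price is the extra Banach space geometry --- the lattice structure induced by the unconditional basis, the chain UMD $\Rightarrow$ nontrivial type $\Rightarrow$ finite cotype, and the Khintchine--Maurey square-function equivalence --- all of which are standard and correctly deployed here; note that the coordinatewise domination $\abs{M_b x} \le \abs{x}$ together with lattice monotonicity is exactly where unconditionality enters, playing the role that the one-line multiplier estimate plays in the paper. What your route buys is a stronger, reusable intermediate fact (the whole unit ball $\{M_b : \norm{b}_{\infty} \le 1\}$ of diagonal multipliers is $\mathcal{R}$-bounded) and a cleaner separation of hypotheses: finite cotype alone suffices for that $\mathcal{R}$-bound, with UMD needed only to pass from $\mathcal{R}$-analyticity to maximal regularity. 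What the paper's route buys is brevity and independence from cotype and lattice theory, at the cost of citing the heavier Kalton--Weis $H^{\infty}$-theorem rather than the $\mathcal{R}$-sectoriality characterization.
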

\begin{proof}
	We show that $A$ has a bounded $H^{\infty}(\Sigma_{\phi})$-calculus for $\frac{\pi}{2} > \phi > \delta$. This implies maximal regularity by a result of Kalton and Weis~\cite[Theorem~5.3]{KalWei01} (notice that UMD-spaces have property $(\Delta)$). Let $f \in H_0^{\infty}(\Sigma_{\phi})$. Notice that $f(A)$ is the multiplier associated to the sequence $(f(\gamma_m))_{m \in \IN}$. If $K$ denotes the unconditional constant of $(e_m)_{m \in \IN}$, we have
		\[ \biggnorm{\sum_{m=1}^{\infty} f(\gamma_m) a_m e_m} \le K \norm{f}_{\infty} \biggnorm{\sum_{m=1}^{\infty} a_m e_m}, \]
	which is exactly the boundedness of the $H^{\infty}(\Sigma_{\phi})$-calculus of $A$. 
\end{proof}

\begin{remark}
	Conversely, for a conditional basis the multiplier associated to the sequence $(2^m)_{m \in \IN}$ never has a bounded $H^{\infty}$-calculus as the sequence is interpolating. We refer to~\cite[Example, p.~29]{Are04} for details. Notice that this is enough to show that having a bounded $H^{\infty}$-calculus does not extrapolate: the trigonometric basis $(e^{imx})_{m \in \IZ}$ on $L^p([0,2\pi])$ for $p \in (1, \infty)$ is unconditional if and only if $p=2$. Hence, the consistent semigroups generated by
		\[ -A_p\biggl(\sum_{m=1}^{\infty} a_m e^{imx}\biggr) = -\sum_{m=1}^{\infty} 2^m a_m e^{imx} \]
	have a bounded $H^{\infty}$-calculus if and only if $p = 2$. Nevertheless, these generators have maximal regularity~\cite[Example, p.~31]{Are04} (notice that BIP implies maximal regularity on UMD-spaces). There is also a positive result: as in the case of maximal regularity the boundedness of the $H^{\infty}$-calculus extrapolates to $L^p$ if the semigroup satisfies Gaussian bounds~\cite[Theorem~3.1]{DuoRob96}.
\end{remark}

\section{Transference to \texorpdfstring{$\Rad(X)$}{Rad(X)}}

In this section we explain how we can describe $\mathcal{R}$-analyticity of a semigroup in terms of holomorphy of an associated semigroup on $\Rad(X)$. In our counterexamples we will always show that the holomorphy of this semigroup is violated instead of working directly with $\mathcal{R}$-analyticity.

\begin{definition}[Associated Semigroup on $\Rad(X)$]\label{definition:associated_semigroup} Let $(T(z))_{z \in \Sigma}$ be a holomorphic $C_0$-semigroup on a Banach space $X$. Given a sequence $(q_n)_{n \in \IN} \subset (0,1)$, one defines the \emph{associated semigroup} $(\mathcal{T}(z))_{z \in \Sigma}$ on the finite Rademacher sums by
	\begin{equation*}
		\mathcal{T}(z) \biggl( \sum_{n=1}^{N} r_n x_n \biggr) \coloneqq \sum_{n=1}^{N} r_n T(q_n z) x_n \qquad (z \in \Sigma, N \in \IN).
	\end{equation*}
\end{definition}

For $x \in X$ one often uses the notation $r_n \otimes x$ for the function $\omega \mapsto r_n(\omega) x$ in $\Rad(X)$.

\begin{remark} Let $(T(z))_{z \in \Sigma}$ be the holomorphic $C_0$-semigroup generated by the multiplier $-A$ associated to a sequence $(-\gamma_m)_{m \in \IN}$ as in Theorem~\ref{theorem:semigroup_generation}. Then for $\mathcal{T}(z)$ we have
	\begin{equation}
		\label{eq:rad_expression}
		\begin{split} 
			\mathcal{T}(z) \biggl(\sum_{m,n=1}^N a_{nm} r_n \otimes e_m \biggr) & = \sum_{n=1}^N r_n T(q_n z)\biggl( \sum_{m=1}^N a_{nm} e_m \biggr) \\
			& = \sum_{n,m=1}^N e^{- q_n \gamma_m z} a_{nm} r_n \otimes e_m.
		\end{split}
	\end{equation}
\end{remark}

We will only show part~\ref{theorem:arendt_bu} of the following transference theorem because the converse is not necessary for constructing counterexamples (for a proof of part~\ref{theorem:arendt_bu_converse} see~\cite[Theorem~3.6]{AreBu03}).

\begin{theorem}[W. Arendt, S. Bu]\label{theorem:arendt_bu_all} Let $(T(z))$ be a holomorphic $C_0$-semigroup on a Banach space $X$. Then the following hold:
	\begin{thm_enum}
		\item\label{theorem:arendt_bu} If $(T(z))$ is $\mathcal{R}$-analytic, then the associated semigroup extends to a holomorphic $C_0$-semigroup $(\mathcal{T}(z))$ on $\Rad(X)$. Moreover, there exist $M, \omega > 0$ such that
			\begin{equation*} \norm{\mathcal{T}(z)} \le M e^{\omega \abs{z}} \end{equation*}
		holds independently of the chosen sequence $(q_n)_{n \in \IN} \subset (0,1)$.
		\item\label{theorem:arendt_bu_converse} Conversely, if the associated semigroup $(\mathcal{T}(z))$ is strongly continuous and holomorphic for some $(q_n)_{n \in \IN}$ being dense in $(0,1)$, then $(T(z))$ is $\mathcal{R}$-analytic. 
	\end{thm_enum}
\end{theorem}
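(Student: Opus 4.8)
The plan is to prove part~\ref{theorem:arendt_bu}: assuming $\mathcal{R}$-analyticity, I must show the associated semigroup extends to a holomorphic $C_0$-semigroup on $\Rad(X)$ with the stated uniform exponential bound. The key idea is that $\mathcal{R}$-boundedness is exactly the condition that makes operators of the form $\sum_n r_n \otimes T_n x_n$ well-behaved on $\Rad(X)$. First I would observe that by definition of $\mathcal{R}$-analyticity there is a $\delta > 0$ with $\R{T(z): z \in \Sigma_\delta, \abs{z} \le 1} =: C < \infty$. The crucial structural point is that for any fixed $z$, the operator $\mathcal{T}(z)$ acts diagonally: it sends $\sum_n r_n \otimes x_n$ to $\sum_n r_n \otimes T(q_n z) x_n$, i.e. it applies a different semigroup operator $T(q_n z)$ to each Rademacher coordinate. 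Since each $q_n \in (0,1)$, whenever $z \in \Sigma_\delta$ with $\abs{z} \le 1$ we have $q_n z \in \Sigma_\delta$ with $\abs{q_n z} \le 1$ as well, so all the operators $T(q_n z)$ lie in the $\mathcal{R}$-bounded family. By the very definition of $\mathcal{R}$-boundedness in~\eqref{eq:rademacher_bounded}, this yields
\[ \biggnorm{\sum_{n=1}^N r_n \otimes T(q_n z) x_n}_{\Rad(X)} \le C \biggnorm{\sum_{n=1}^N r_n \otimes x_n}_{\Rad(X)}, \]
which is precisely $\norm{\mathcal{T}(z)} \le C$ for $z \in \Sigma_\delta$, $\abs{z} \le 1$, uniformly in the choice of $(q_n)$. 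Thus $\mathcal{T}(z)$ extends boundedly to all of $\Rad(X)$ on this set.

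Next I would upgrade this local bound to a global one. Using the semigroup law $T((s+t)) = T(s)T(t)$, which transfers to $\mathcal{T}$ coordinatewise, I can write $\mathcal{T}(z)$ for arbitrary $z \in \Sigma_\delta$ as a product of operators $\mathcal{T}(z/k)$ with $\abs{z/k} \le 1$. Choosing $k = \lceil \abs{z} \rceil$ gives $\norm{\mathcal{T}(z)} \le C^{\lceil \abs{z}\rceil} \le C \cdot C^{\abs{z}} = M e^{\omega \abs{z}}$ with $M = C$ and $\omega = \log C$ (assuming $C \ge 1$, which one may without loss of generality), again independently of $(q_n)$. This establishes the claimed exponential estimate on the sector $\Sigma_\delta$.

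It then remains to verify that $(\mathcal{T}(z))_{z \in \Sigma_\delta}$ is genuinely a holomorphic $C_0$-semigroup and not merely a uniformly bounded family. The algebraic semigroup property $\mathcal{T}(z_1)\mathcal{T}(z_2) = \mathcal{T}(z_1 + z_2)$ follows immediately on finite Rademacher sums from the coordinatewise semigroup law and extends by density and the uniform bound. For holomorphy I would argue on the dense subspace of finite Rademacher sums: for fixed $\sum_n r_n \otimes x_n$, the map $z \mapsto \sum_n r_n \otimes T(q_n z) x_n$ is a finite sum of holomorphic $\Rad(X)$-valued functions (each $z \mapsto T(q_n z) x_n$ is holomorphic since $(T(z))$ is), hence holomorphic; combined with the local uniform bound $\norm{\mathcal{T}(z)} \le C$, a standard Vitali/uniform-boundedness argument promotes holomorphy on the dense subspace to holomorphy on all of $\Rad(X)$. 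Strong continuity at the origin follows similarly from strong continuity of $(T(z))$ on each coordinate together with the uniform bound. The main obstacle I anticipate is the density argument for extending strong continuity and holomorphy from finite Rademacher sums to all of $\Rad(X)$: one must ensure the uniform bound controls the tails, and care is needed because the bound is only local in $z$ near $0$; but since holomorphy and $C_0$-continuity are local properties this is exactly where the uniform estimate on $\{z \in \Sigma_\delta : \abs{z} \le 1\}$ does the work.
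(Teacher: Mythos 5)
Your proposal is correct and takes essentially the same approach as the paper's proof: you obtain the local uniform bound $\norm{\mathcal{T}(z)} \le \R{T(w): w \in \Sigma_\delta, \abs{w} \le 1}$ by applying $\mathcal{R}$-boundedness coordinatewise (valid since $q_n z$ stays in the sector and the unit disc), then upgrade to the exponential estimate via the semigroup law, and finally extend strong continuity from finite Rademacher sums by density and local boundedness. The only differences are cosmetic: you split $z$ into $\lceil \abs{z} \rceil$ equal pieces where the paper writes $z = (n+s)z/\abs{z}$ and multiplies along the ray, and you spell out the Vitali argument for holomorphy, which the paper leaves implicit.
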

\begin{proof}
	For $z$ in some sufficiently small sector $\Sigma$ one has
	\begin{align*}
		\MoveEqLeft \biggnorm{\mathcal{T}(z)\biggl(\sum_{n=1}^N r_n x_n \biggr)} = \biggnorm{\sum_{n=1}^N r_n T(q_n z) x_n} \le \R{T(\lambda z): \lambda \in (0,1)} \biggnorm{\sum_{n=1}^N r_n x_n}.
	\end{align*} 
	Since the finite Rademacher sums are dense in $\Rad(X)$, $\mathcal{T}(z)$ extends to a bounded linear  operator on $\Rad(X)$. Now let $z \in \Sigma$ be arbitrary and $M$ be given by $M \coloneqq \R{T(z): z \in \Sigma, \abs{z} \le 1}$. There exist unique $n \in \IN$, $s \in [0,1)$ such that $z = (n + s)\frac{z}{\abs{z}}$. Then for $\omega = \log M$ one has 
		\begin{align*} 
			\norm{\mathcal{T}(z)} \le \biggnorm{\mathcal{T}\biggl( \frac{z}{\abs{z}} \biggr)}^n \biggnorm{\mathcal{T} \biggl( s\frac{z}{\abs{z}} \biggr)} \le M e^{n \log M} \le M e^{\omega \abs{z}}.
		\end{align*}
	The strong continuity can easily be checked for finite Rademacher sums and can be extended to arbitrary elements of $\Rad(X)$ by the local boundedness of $z \mapsto \mathcal{T}(z)$ in operator norm.
\end{proof}

\section{Warm-up: Counterexamples on \texorpdfstring{$c_0$}{c\_0} and \texorpdfstring{$\ell^1$}{l\_1}}

Before we consider counterexamples on general Banach spaces with an unconditional basis, we construct counterexamples for the concrete Banach spaces $c_0$ and $\ell^1$. They also illustrate our approach. The following elementary lemma will be useful in the future and throws light on the special role played by the sequence $(2^m)_{m \in \IN}$ when used as multiplier sequence.

\begin{lemma}\label{lemma:max_difference} The function $d(t) = e^{-2^m t} - e^{-2^{m+1}t}$ $(m \in \IN)$ possesses a unique maximum in $[0,1]$ at $t_0 = \frac{\log 2}{2^m}$. Moreover, the maximum value $d(t_0) = \frac{1}{4}$ is independent of $m$.
\end{lemma}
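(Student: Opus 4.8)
The plan is to find the critical point of $d$ by elementary calculus and verify it is a maximum, then evaluate there. First I would compute the derivative
\[
d'(t) = -2^m e^{-2^m t} + 2^{m+1} e^{-2^{m+1} t},
\]
and set it equal to zero. Dividing by $2^m e^{-2^{m+1} t}$ (which is positive, hence harmless), the equation $d'(t) = 0$ becomes $2 e^{2^m t} = e^{2^{m+1} t}$, i.e. $e^{2^m t} = 2$ after dividing by $e^{2^m t}$ and using $2^{m+1} = 2 \cdot 2^m$. Solving gives $2^m t = \log 2$, so the unique critical point in $(0,\infty)$ is $t_0 = \frac{\log 2}{2^m}$, which indeed lies in $[0,1]$ since $\log 2 < 1 \le 2^m$.

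Next I would argue that $t_0$ is the unique maximum on $[0,1]$. Since $d(0) = 0$ and $d(t) \to 0$ as $t \to \infty$ with $d(t) > 0$ for all $t > 0$ (because $e^{-2^m t} > e^{-2^{m+1} t}$ whenever $t > 0$), the positive function $d$ must attain its maximum at an interior critical point. As $t_0$ is the only critical point in $(0,\infty)$, it is the global maximum on $[0,\infty)$, and in particular the unique maximum on $[0,1]$. Alternatively one can check the sign of $d'$ directly: $d'(t) > 0$ for $t < t_0$ and $d'(t) < 0$ for $t > t_0$, which follows from the monotonicity of the single exponential factor after the same algebraic rearrangement.

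Finally I would evaluate $d(t_0)$. At $t_0$ we have $e^{-2^m t_0} = e^{-\log 2} = \frac{1}{2}$, and since $2^{m+1} t_0 = 2 \cdot 2^m t_0 = 2 \log 2$, also $e^{-2^{m+1} t_0} = e^{-2\log 2} = \frac{1}{4}$. Therefore
\[
d(t_0) = \frac{1}{2} - \frac{1}{4} = \frac{1}{4},
\]
which is manifestly independent of $m$, completing the proof.

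This argument is entirely routine, so I do not expect a genuine obstacle; the only point requiring a moment of care is confirming that $t_0 \in [0,1]$ for every $m \in \IN$ (so that the stated maximum is actually attained inside the claimed interval rather than being cut off at the endpoint $t=1$), which follows immediately from $\log 2 < 1$.
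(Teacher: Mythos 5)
Your proof is correct, and it is exactly the routine calculus argument intended here: the paper itself states this lemma as elementary and gives no proof at all, so there is nothing to deviate from. Your one point of extra care—checking $t_0 = \frac{\log 2}{2^m} < 1$ so the maximum is attained in the interior of $[0,1]$—is the only nontrivial detail, and you handle it correctly.
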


\subsection{The space \texorpdfstring{$c_0$}{c\_0}}

Let $(e_m)_{m \in \IN}$ be the standard unit vector basis of $c_0$. Then the \emph{summing basis} $(s_m)_{m \in \IN}$ given by $s_m \coloneqq \sum_{k=1}^m e_k$ is a conditional basis of $c_0$~\cite[Example~3.1.2]{AlbKal06}.

\begin{proposition}\label{prop:ce_c_0}
	Let $(s_m)_{m \in \IN}$ be the summing basis of $c_0$. Then $-A$ given by
	\begin{align*}
		D(A) = \biggl\{ x = \sum_{m=1}^{\infty} a_m s_m: \sum_{m=1}^{\infty} 2^m a_m s_m \quad \mathrm{ exists} \biggr\} \\
		A \biggl( \sum_{m=1}^{\infty} a_m s_m \biggr) = \sum_{m=1}^{\infty} 2^m a_m s_m
	\end{align*}
	generates a holomorphic $C_0$-semigroup $(T(z))_{z \in \Sigma_{\pi/2}}$ that is not $\mathcal{R}$-bounded on $[0,1]$.
\end{proposition}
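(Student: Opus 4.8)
The holomorphy of $(T(z))_{z \in \Sigma_{\pi/2}}$ together with the identification of $T(z)$ as the multiplier associated to $(e^{-2^m z})_{m \in \IN}$ is immediate from Theorem~\ref{theorem:semigroup_generation}, since $(2^m)_{m \in \IN}$ is an increasing sequence of positive reals. The entire content of the statement therefore lies in showing that $\{T(t) : t \in [0,1]\}$ fails to be $\mathcal{R}$-bounded, and since $\mathcal{R}$-boundedness is independent of the exponent it suffices to refute the estimate~\eqref{eq:rademacher_bounded} for $p = 2$ on a suitable subfamily.

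The plan is to exploit the conditionality of the summing basis through a carefully chosen test family. Guided by Lemma~\ref{lemma:max_difference}, I would set $t_n \coloneqq \frac{\log 2}{2^n} \in (0,1) \subset [0,1]$ and take as test vectors the unit vectors $x_n \coloneqq e_{n+1} = s_{n+1} - s_n$, which have exactly two nonzero summing-basis coefficients, at levels $n$ and $n+1$. Since $T(t)$ is the multiplier associated to $(e^{-2^m t})_{m \in \IN}$, a direct computation gives $T(t_n) x_n = e^{-2^{n+1} t_n} s_{n+1} - e^{-2^n t_n} s_n = \tfrac14 s_{n+1} - \tfrac12 s_n$. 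The crucial observation is that, read in the unit vector basis $(e_k)$, the first coordinate of $T(t_n) x_n$ equals $\tfrac14 - \tfrac12 = -\tfrac14$ for \emph{every} $n$: the long tails $s_n$ all overlap at the first coordinate, and Lemma~\ref{lemma:max_difference} is precisely what guarantees that the size of this overlap is the constant $\tfrac14$, independent of $n$.

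Consequently, for the Rademacher sum $\sum_{n=1}^N r_n(\omega) T(t_n) x_n$ the first coordinate is exactly $-\tfrac14 \sum_{n=1}^N r_n(\omega)$, so that its $c_0$-norm is bounded below by $\tfrac14 \abs{\sum_{n=1}^N r_n(\omega)}$. Integrating and using the orthonormality of the Rademacher functions in $L^2([0,1])$ yields
\[ \int_0^1 \biggnorm{\sum_{n=1}^N r_n(\omega) T(t_n) x_n}_{c_0}^2 \, d\omega \ge \frac{1}{16} \int_0^1 \biggabs{\sum_{n=1}^N r_n(\omega)}^2 d\omega = \frac{N}{16}. \]
On the other hand, since the $x_n = e_{n+1}$ occupy pairwise distinct coordinates, $\sum_{n=1}^N r_n(\omega) x_n$ has $c_0$-norm identically $1$, so the right-hand side of~\eqref{eq:rademacher_bounded} equals $1$. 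Letting $N \to \infty$ forces the $\mathcal{R}$-bound of $\{T(t_n) : n \in \IN\}$, and hence of the larger family $\{T(t) : t \in [0,1]\}$, to be infinite, which is the desired conclusion.

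The only genuinely delicate point is the choice of the test family: one must recognize that the conditionality of the summing basis makes the tails $s_n$ accumulate at a single fixed coordinate, and that the times $t_n$ supplied by Lemma~\ref{lemma:max_difference} are exactly what is needed to keep the accumulated mass at the constant level $\tfrac14$ rather than letting it decay as $n \to \infty$. Once these choices are in place the remaining estimates are elementary, requiring nothing beyond the orthonormality of the Rademacher system.
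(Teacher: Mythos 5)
Your proof is correct and is essentially the paper's own argument: both rest on choosing the times $\frac{\log 2}{2^k}$ from Lemma~\ref{lemma:max_difference} so that each difference of summing-basis vectors deposits the constant mass $-\tfrac14$ on the first coordinate, making the Rademacher sum grow like $\sqrt{N}$ against norm-one inputs. The only (cosmetic) differences are that the paper uses the disjoint pairs $s_{2m}-s_{2m-1}=e_{2m}$ and phrases the contradiction as unboundedness of the associated operator $\mathcal{T}(1)$ on $\Rad(c_0)$ with Khintchine's inequality at $p=1$, whereas you use consecutive pairs $s_{n+1}-s_n$ and refute the defining inequality of $\mathcal{R}$-boundedness directly at $p=2$ via orthonormality of the Rademacher functions.
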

\begin{proof}
	Assume that $\R{T(t): 0 < t \le 1} < \infty$. Then $(\mathcal{T}(t))_{t \ge 0}$ is a $C_0$-semigroup on $\Rad(c_0)$. We now consider $x_N \coloneqq \sum_{m=1}^N (s_{2m} - s_{2m-1}) \otimes r_m$ for $N \in \IN$. Its norm in $\Rad(c_0)$ is
	\begin{equation*} \biggnorm{\sum_{m=1}^N (s_{2m} - s_{2m-1}) \otimes r_m} = \int_0^1 \biggnorm{\sum_{m=1}^N r_m(\omega) e_{2m}}_{\infty} d\omega = 1. \end{equation*}
	One has by~\eqref{eq:rad_expression}
	\begin{equation*} \mathcal{T}(1)(x_N) = \sum_{m=1}^N (e^{-2^{2m} q_m} s_{2m} - e^{-2^{2m-1} q_m} s_{2m-1}) \otimes r_m. \end{equation*}
	In particular for the choice $q_m = \frac{\log 2}{2^{2m-1}}$ the first coordinate of the above expression in $\omega$ is given by $-\frac{1}{4} \sum_{m=1}^N r_m(\omega)$. Contradictory to our assumption that $\mathcal{T}(1)$ is bounded this shows that
	\begin{equation*} \norm{\mathcal{T}(1)(x_N)} \ge \frac{1}{4} \int_0^1 \biggabs{\sum_{m=1}^N r_m(\omega)} d\omega \ge \frac{C^{-1}}{4} \sqrt{N} \end{equation*}
	by Khintchine's inequality. So $(T(t))_{t \ge 0}$ is not $\mathcal{R}$-bounded on $[0,1]$.
\end{proof}

\subsection{The space \texorpdfstring{$\ell^1$}{l\_1}}

Let $(e_m)_{m \in \IN}$ denote the standard unit vector basis of $\ell^1$. Then $(f_m)_{m \in \IN}$ given by $f_1 = e_1$ and $f_m = e_m - e_{m-1}$ for $m \ge 2$ is a conditional basis for $\ell^1$~\cite[Example~14.2]{Sin70}. Again this conditional basis can be used to construct a counterexample.

\begin{proposition}\label{prop:ce_ell_1} 
	Let $(f_m)_{m \in \IN}$ be the basis of $\ell^1$ defined above. Then $-A$ given by
	\begin{align*}
		D(A) = \biggl\{ x = \sum_{m=1}^{\infty} a_m f_m: \sum_{m=1}^{\infty} 2^m a_m f_m \quad \mathrm{ exists} \biggr\} \\
		A \biggl( \sum_{m=1}^{\infty} a_m f_m \biggr) = \sum_{m=1}^{\infty} 2^m a_m f_m
	\end{align*}
	generates a holomorphic $C_0$-semigroup $(T(z))_{z \in \Sigma_{\pi/2}}$ that is not $\mathcal{R}$-bounded on $[0,1]$.
\end{proposition}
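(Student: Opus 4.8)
The generation statement is immediate from Theorem~\ref{theorem:semigroup_generation}, since $(2^m)_{m \in \IN}$ is an increasing sequence of positive reals; thus $-A$ generates a holomorphic $C_0$-semigroup $(T(z))_{z \in \Sigma_{\pi/2}}$, where $T(z)$ is the multiplier associated to $(e^{-z 2^m})_{m \in \IN}$ relative to $(f_m)$. For the failure of $\mathcal{R}$-boundedness the plan is to argue exactly as in Proposition~\ref{prop:ce_c_0}, by contradiction: assuming $\R{T(t): 0 < t \le 1} < \infty$, the proof of Theorem~\ref{theorem:arendt_bu} shows that $\mathcal{T}(1)$ is bounded on $\Rad(\ell^1)$ with a norm bound that is independent of the chosen sequence $(q_n)_{n \in \IN} \subset (0,1)$. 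I would then exhibit, for each $N$ and a suitable $N$-dependent choice of $(q_m)$, a vector $x_N$ for which $\norm{\mathcal{T}(1) x_N} / \norm{x_N} \gtrsim \sqrt N$, contradicting this uniform bound.

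The mechanism is dual to the $c_0$ case and rests on the interplay between the diagonal action of $\mathcal{T}$ in the $(f_m)$-basis and the standard unit-vector coordinates of $\ell^1$. Writing $y = \sum_k \beta_k(y) e_k$ for the standard expansion (so $\norm{y}_1 = \sum_k \abs{\beta_k(y)}$) and using $e_{N+1} = \sum_{j=1}^{N+1} f_j$ together with~\eqref{eq:rad_expression}, I would compute
\[ T(q_m) e_{N+1} = \sum_{j=1}^{N+1} e^{-q_m 2^j} f_j, \qquad \beta_k\bigl(T(q_m) e_{N+1}\bigr) = e^{-q_m 2^k} - e^{-q_m 2^{k+1}} \quad (1 \le k \le N). \]
The right-hand side is precisely the function $d$ of Lemma~\ref{lemma:max_difference} (with $m$ replaced by $k$) evaluated at $q_m$. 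Choosing $q_m \coloneqq \frac{\log 2}{2^m}$, the Lemma gives that the $m$-th standard coordinate of $T(q_m) e_{N+1}$ equals its maximal value $d(q_m) = \tfrac14$, independently of $m$.

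With $x_N \coloneqq \sum_{m=1}^N r_m \otimes e_{N+1}$ the domain norm is cheap: since $x_N(\omega) = \bigl(\sum_{m=1}^N r_m(\omega)\bigr) e_{N+1}$, one gets $\norm{x_N}_{\Rad(\ell^1)} = \int_0^1 \abs{\sum_{m=1}^N r_m(\omega)} \, d\omega \le \sqrt N$ by Cauchy--Schwarz. For the image I would expand $\mathcal{T}(1) x_N = \sum_{m=1}^N r_m \otimes T(q_m) e_{N+1}$ in the standard basis and bound the $\ell^1$-norm coordinate by coordinate. For each fixed $k$, orthonormality of the Rademacher functions yields $\int_0^1 \abs{\sum_m r_m(\omega)\, \beta_k(T(q_m) e_{N+1})} \, d\omega \ge \abs{\beta_k(T(q_k) e_{N+1})}$, which for $1 \le k \le N$ equals $\tfrac14$ by the previous step. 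Summing over $k = 1, \ldots, N$ gives $\norm{\mathcal{T}(1) x_N}_{\Rad(\ell^1)} \ge N/4$, whence $\norm{\mathcal{T}(1)} \ge \tfrac14 \sqrt N$ for every $N$ --- the desired contradiction.

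The conceptual heart, and the step I expect to be most delicate, is the choice of $x_N$. In contrast to $c_0$, where the sup-norm makes sign cancellation in the domain cheap, the randomized $\ell^1$-norm controls a square function and admits no such cancellation, so the blow-up must instead be produced in the image. The point is that the single, inexpensive standard vector $e_{N+1}$ (spread over $N$ Rademachers only at the mild cost $\sqrt N$) is dispersed by the semigroup across $N$ distinct coordinates, each of which carries the fixed peak $\tfrac14$ supplied by Lemma~\ref{lemma:max_difference}. The two facts that must be checked carefully are that a single sequence $(q_m)$ can simultaneously place the peak in coordinate $m$ for every $m$, and that the coordinatewise lower bound via Rademacher orthonormality genuinely accumulates the full factor $N$.
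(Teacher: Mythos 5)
Your proof is correct and follows essentially the same route as the paper's: the same test vector (the paper uses $\sum_{n=1}^N r_n \otimes e_N$, you use $\sum_{m=1}^N r_m \otimes e_{N+1}$), the same choice $q_m = \frac{\log 2}{2^m}$, and the same application of Lemma~\ref{lemma:max_difference} to the telescoped standard coordinates $e^{-2^k q_m} - e^{-2^{k+1} q_m}$. The only deviation is in extracting the per-coordinate lower bound $\tfrac14$: the paper applies Khintchine's inequality and keeps only the diagonal term of the resulting $\ell^2$ sum, whereas you test against $r_k$ and use orthonormality of the Rademachers, which is marginally more elementary and dispenses with the Khintchine constant.
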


\begin{proof}
	One proceeds as in the proof of Proposition~\ref{prop:ce_c_0}. This time one looks at the vector $x_N = \sum_{n,m=1}^N r_n \otimes f_m = \sum_{n=1}^N r_n \otimes e_N$ in $\Rad(\ell_1)$ for $N \in \IN$. By Khintchine's inequality, its norm in $\Rad(\ell^1)$ is
		\begin{equation*} \biggnorm{\sum_{n,m=1}^N r_n \otimes f_m} = \int_0^1 \biggnorm{\sum_{n=1}^N r_n(\omega) e_N}_{\ell^1} d\omega = \int_0^1 \biggabs{\sum_{n=1}^N r_n(\omega)} d\omega \le C \sqrt{N}. \end{equation*}
	A short calculation using~\eqref{eq:rad_expression} shows that $\mathcal{T}(1)x_N$ is given by
		\begin{equation*}
			\sum_{n,m=1}^N e^{-2^m q_n} r_n \otimes f_m = \sum_{n=1}^N e^{-2^N q_n} r_n \otimes e_N + \sum_{m=1}^{N-1} \sum_{n=1}^N (e^{-2^m q_n} - e^{-2^{m+1} q_n}) r_n\otimes e_m
		\end{equation*}
	Now, a second application of Khintchine's inequality shows that	 one has
	\begin{align*}
			\norm{\mathcal{T}(1)x_N} & \ge \sum_{m=1}^{N-1} \int_0^1 \biggabs{\sum_{n=1}^N (e^{-2^m q_n} - e^{-2^{m+1} q_n}) r_n(\omega)} d\omega \\
			& \ge C^{-1} \sum_{m=1}^{N-1} \biggl( \sum_{n=1}^N \abs{e^{-2^m q_n} - e^{-2^{m+1} q_n}}^2 \biggr)^{1/2}.
		\end{align*}
	We again choose $q_m = \frac{\log 2}{2^m}$. By estimating the right hand side, we obtain
		\begin{equation*} \norm{\mathcal{T}(1)x_N} \ge C^{-1} \sum_{m=1}^{N-1} \frac{1}{4} = \frac{N-1}{4C}. \end{equation*}
	As in the last proposition, this shows that $(T(t))_{t \ge 0}$ is not $\mathcal{R}$-bounded on $[0,1]$. 
\end{proof}

\section{A new proof of the Kalton-Lancien Theorem}\label{sec:general_results}

We now consider Banach spaces with an unconditional basis. The following notions from the geometry of Banach spaces will be useful.

\begin{definition}[Equivalence of Bases] Two basic sequences $(e_m)_{m \in \IN}$ and $(f_m)_{m \in \IN}$ for two Banach spaces $X$ and $Y$ are called \emph{equivalent} if for each sequence $(a_m)_{m \in \IN}$ of complex numbers
	\begin{equation*} \sum_{m=1}^{\infty} a_m e_m \quad \text{converges in $X$ if and only if} \quad \sum_{m=1}^{\infty} a_m f_m \quad \text{converges in $Y$.} \end{equation*}
\end{definition}

Notice that if $(e_m)_{m \in \IN}$ is an unconditional basis for a Banach space $X$, then $(e_{\pi(m)})_{m \in \IN}$ is an unconditional basis for $X$ for all permutations $\pi: \IN \to \IN$.

\begin{definition}[Symmetric Basis] An unconditional basis $(e_m)_{m \in \IN}$ for a Banach space $X$ is \emph{symmetric} if $(e_m)_{m \in \IN}$ is equivalent to $(e_{\pi(m)})_{m \in \IN}$ for all permutations $\pi$ of $\IN$.
\end{definition}

\begin{proposition}\label{prop:ce_constructor}
	Let $X$ be a Banach space with an unconditional, non-symmetric normalized Schauder basis $(e_m)_{m \in \IN}$. Then there exists a generator $-A$ of a holomorphic $C_0$-semigroup $(T(z))_{z \in \Sigma_{\pi/2}}$ on $X$ which is not $\mathcal{R}$-bounded on $[0,1]$. More precisely, there exists a Schauder basis $(f_m)_{m \in \IN}$ of $X$ such that $A$ is given by
	\begin{align*}
		D(A) = \biggl\{ x = \sum_{m=1}^{\infty} a_m f_m: \sum_{m=1}^{\infty} 2^m a_m f_m \quad \mathrm{ exists} \biggr\} \\
		A \biggl( \sum_{m=1}^{\infty} a_m f_m \biggr) = \sum_{m=1}^{\infty} 2^m a_m f_m.
	\end{align*}
\end{proposition}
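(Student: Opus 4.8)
The plan is to mimic the two warm-up examples, where the decisive feature was a \emph{conditional} basis $(f_m)$ together with test vectors in $\Rad(X)$ on which the associated semigroup concentrates. First I would record the logical skeleton: by Theorem~\ref{theorem:semigroup_generation} the multiplier $A$ associated to the increasing sequence $(2^m)$ with respect to \emph{any} Schauder basis $(f_m)$ generates a holomorphic $C_0$-semigroup, so the only thing to establish is the failure of $\mathcal{R}$-boundedness on $[0,1]$. Arguing by contradiction, $\R{T(t): 0 < t \le 1} < \infty$ would force $(\mathcal T(t))_{t \ge 0}$ to be a $C_0$-semigroup on $\Rad(X)$ bounded uniformly in $t \in [0,1]$ \emph{independently of the sequence} $(q_n) \subset (0,1)$, by Theorem~\ref{theorem:arendt_bu}. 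The whole task is then to produce one basis $(f_m)$, one family $x_N \in \Rad(X)$, and one choice of $(q_n)$ for which $\norm{\mathcal T(1)x_N}/\norm{x_N} \to \infty$, using the explicit formula~\eqref{eq:rad_expression}.

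The heart of the matter is to convert non-symmetry into such a configuration. By definition, non-symmetry provides a permutation $\pi$ for which $(e_{\pi(m)})$ is not equivalent to $(e_m)$; since the basis is unconditional I may take nonnegative coefficients, and a gliding-hump and blocking argument should produce disjointly supported finite blocks together with a rearrangement of their indices whose norm is amplified by a factor tending to infinity. As in the warm-ups, there is a dichotomy: either the rearranged partial sums are unbounded while the original ones stay bounded (the \emph{summing} situation exemplified by $c_0$ in Proposition~\ref{prop:ce_c_0}), or the reverse (the \emph{differencing} situation of $\ell^1$ in Proposition~\ref{prop:ce_ell_1}). In either case I would define the conditional basis $(f_m)$ by the corresponding telescoping or summing modification of $(e_m)$ along the bad rearrangement, so that, dually, a fixed family of coordinate functionals sees all the $f_m$ in a block with comparable weight; this is exactly the mechanism that made the $e_1$-coordinate in the $c_0$ proof (resp. the telescoped coordinates in the $\ell^1$ proof) pick up the common contribution.

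Having fixed $(f_m)$, the remaining steps follow the template. I would verify that $(f_m)$ is genuinely a Schauder basis by bounding its partial-sum projections, which should reduce to the uniform bound coming from unconditionality of $(e_m)$ together with the block structure, and then form $x_N$ by placing Rademacher-weighted block differences $f_j - f_{j'}$ (or telescoped sums) so that $\norm{x_N}_{\Rad(X)}$ is controlled via Khintchine's inequality. Applying~\eqref{eq:rad_expression} and choosing $q_n = \log 2/2^{k_n}$ for suitable levels $k_n$, Lemma~\ref{lemma:max_difference} guarantees that at each diagonal level the factor $e^{-2^{k}q_n} - e^{-2^{k+1}q_n}$ equals $\tfrac14$; summing these independent $\tfrac14$-contributions against the $r_n$ and invoking Khintchine once more should yield $\norm{\mathcal T(1)x_N} \ge c\,\norm{x_N}\,h(N)$ with $h(N) \to \infty$, the desired contradiction.

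The main obstacle is the middle step: manufacturing from \emph{abstract} non-symmetry a single conditional basis $(f_m)$ for which the rearrangement discrepancy is transported faithfully into the diagonal concentration of~\eqref{eq:rad_expression}. Unlike the warm-ups, where the blocks and the distinguished functionals are handed to us by the concrete geometry of $c_0$ and $\ell^1$, here I must simultaneously build the blocks from the quantitative failure of permutation-equivalence, ensure that $(f_m)$ remains a basis, and keep $\norm{x_N}$ small while the images blow up. Controlling these competing requirements — bounded input norm versus amplified output norm — uniformly in $N$ is precisely where the non-symmetry hypothesis must be spent, and I expect this bookkeeping, rather than the Khintchine and Lemma~\ref{lemma:max_difference} estimates, to be the crux.
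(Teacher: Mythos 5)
Your skeleton is the right one --- multiplier $(2^m)$, holomorphy for free from Theorem~\ref{theorem:semigroup_generation}, contradiction via Theorem~\ref{theorem:arendt_bu_all}\ref{theorem:arendt_bu}, the choice $q_m = \log 2/2^k$ and Lemma~\ref{lemma:max_difference} --- but the step you yourself flag as the crux is genuinely missing, and it is not filled by a ``gliding-hump and blocking argument.'' The definition of non-symmetry only gives a permutation $\pi$ of $\IN$ with $(e_{\pi(m)})_{m \in \IN}$ not equivalent to $(e_m)_{m \in \IN}$; to run any interlacing construction you need the two inequivalent sequences to live on \emph{disjoint} index sets, and you need to know that the interlaced system is still a Schauder basis. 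The paper gets both from~\cite[Proposition~23.2]{Sin70}: non-symmetry yields a permutation $\pi$ \emph{of the even numbers only} such that the basic sequences $(e_{2m-1})_{m \in \IN}$ and $(e_{\pi(2m)})_{m \in \IN}$ are not equivalent, and the two interlaced systems $f_m^{\prime}$ (equal to $e_m$ for $m$ odd and to $e_{\pi(m)} + e_{m-1}$ for $m$ even) and $f_m^{\prime\prime}$ are then automatically Schauder bases --- no blocking, no gliding hump, no separate verification of basis constants. Your ``summing versus differencing'' dichotomy corresponds precisely to the paper's switch between $f_m^{\prime}$ and $f_m^{\prime\prime}$, decided by which of the two sequences carries the convergent expansion of the witnessing coefficients $(a_m)_{m \in \IN}$; but without the disjointness of supports supplied by Singer's result, the basis $(f_m)$ you propose to ``define by telescoping along the bad rearrangement'' is not even well posed.

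The second gap is that your endgame is quantitative where the abstract hypothesis provides no rates. You want vectors $x_N$ with $\norm{x_N}$ bounded and $\norm{\mathcal{T}(1)x_N} \ge c\, h(N) \to \infty$, estimated by Khintchine as in the warm-ups; but inequivalence of two basic sequences is a purely qualitative statement, and nothing in it controls how fast anything diverges. The paper sidesteps this entirely: it feeds the single infinite vector $\sum_{m} a_m e_{\pi(2m)} \otimes r_m$ (convergent in $\Rad(X)$ because unconditionality of $(e_{\pi(2m)})$ absorbs the signs $r_m(\omega)$) into the bounded operator $\mathcal{T}(1)$, passes to an a.e.\ convergent subsequence of the image's partial sums, and applies the coordinate functionals $e^*_{2m-1}$ to read off that the expansion coefficients of the image are $-\tfrac{a_m}{4} r_m(\omega)$ at the odd positions; unconditionality of $(e_m)$ then forces $\sum_m a_m e_{2m-1}$ to converge, contradicting the choice of $(a_m)$. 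No Khintchine inequality and no norm bookkeeping appear in the general case. Your quantitative plan could in principle be salvaged after the Singer construction is in place (non-Cauchy partial sums give blocks bounded below while the input tails vanish), but as written the proposal stops exactly where the proof has to begin.
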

\begin{proof}
	As in the first part of the proof of~\cite[Proposition~23.2]{Sin70} one deduces from the fact that $(e_m)_{m \in \IN}$ is non-symmetric that there exists a permutation $\pi$ of the even numbers such that the unconditional basic sequences $(e_{2m - 1})_{m \in \IN}$ and  $(e_{\pi(2m)})_{m \in \IN}$ are not equivalent. Exactly as in the proof, both
	\begin{align*}
		f_m^{\prime} = \begin{cases} e_m & \text{m odd} \\ e_{\pi(m)} + e_{m-1} & \text{m even} \end{cases} \quad \text{and} \quad f_m^{\prime\prime} = \begin{cases} e_m + e_{\pi(m+1)} & \text{m odd} \\ e_{\pi(m)} & \text{m even} \end{cases}
	\end{align*}
	are Schauder bases for $X$. Since $(e_{2m - 1})_{m \in \IN}$ and  $(e_{\pi(2m)})_{m \in \IN}$ are not equivalent, there exists a sequence $(a_m)_{m \in \IN}$ such that the expansion with respect to the coefficients $(a_m)_{m \in \IN}$ converges for $(e_{2m - 1})_{m \in \IN}$ or for $(e_{\pi(2m)})_{m \in \IN}$ but not for both. For the rest of the proof we will assume without loss of generality that the expansion converges for $(e_{\pi(2m)})_{m \in \IN}$ (if not simply replace $f_m^{\prime}$ by $f_m^{\prime\prime}$ in the next steps). Let $f_m \coloneqq f_m^{\prime}$. We now define $A$ as in the statement of the proposition. By Theorem~\ref{theorem:semigroup_generation}, $-A$ generates a holomorphic semigroup $(T(z))_{z \in \Sigma_{\pi/2}}$. Assume that $(T(t))_{t \ge 0}$ is $\mathcal{R}$-bounded on $[0,1]$. Then for each choice of $(q_n)_{n \in \IN} \subset (0,1)$ the associated semigroup $(\mathcal{T}(t))_{t \ge 0}$ - as defined in Definition~\ref{definition:associated_semigroup} - is a (holomorphic) $C_0$-semigroup on $\Rad(X)$ by Theorem~\ref{theorem:arendt_bu_all}\ref{theorem:arendt_bu}. We now show that
		\begin{equation} \sum_{m=1}^{\infty} a_m e_{\pi(2m)} \otimes r_m \label{eq:ce_argument} \end{equation}
	converges in $\Rad(X)$. Indeed, for fixed $\omega \in [0,1]$ the series $\sum_{m=1}^{\infty} a_m r_m(\omega) e_{\pi(2m)}$ converges by the unconditionality of $(e_{\pi(2m)})_{m \in \IN}$ as $r_m(\omega) \in \{-1, 1\}$ for every $m \in \IN$. Hence, the above series defines a measurable function as the pointwise limit of measurable functions. Moreover, if $K$ denotes the unconditional constant of $(e_{\pi(2m)})_{m \in \IN}$, one has for each $\omega \in [0,1]$
		\begin{equation} \biggnorm{\sum_{m=1}^{\infty} r_m(\omega) a_m e_{\pi(2m)}} \le K \biggnorm{\sum_{m=1}^{\infty} a_m e_{\pi(2m)}}. \label{eq:ce_l1_convergence} \end{equation}
	This shows that the series \eqref{eq:ce_argument} is in $L^1([0,1]; X)$. Using an analogous estimate as \eqref{eq:ce_l1_convergence}, one sees that the sequence of partial sums $\sum_{m=1}^{N} a_m e_{\pi(2m)} \otimes r_m$ converges to $\sum_{m=1}^{\infty} a_m e_{\pi(2m)} \otimes r_m$ in $\Rad(X)$. Notice that $e_{\pi(2m)} = f_{2m} - f_{2m-1}$. By the continuity of $\mathcal{T}(1)$, we obtain from~\eqref{eq:rad_expression} that
		\begin{align*}
			\MoveEqLeft h \coloneqq \mathcal{T}(1)\biggl(\sum_{m=1}^{\infty} a_m (f_{2m} - f_{2m-1}) \otimes r_m \biggr) \\
			& = \lim_{N \to \infty} \sum_{m=1}^{N} a_m \left( e^{-2^{2m} q_m} f_{2m} - e^{-2^{2m-1} q_m} f_{2m-1} \right) \otimes r_m \\
			& = \lim_{N \to \infty} \sum_{m=1}^{N} e^{-2^{2m} q_m} a_m e_{\pi(2m)} \otimes r_m + a_m (e^{-2^{2m} q_m} - e^{-2^{2m-1} q_m}) e_{2m-1} \otimes r_m
		\end{align*}
	exists in $\Rad(X)$. Now choose $q_m = \frac{\log 2}{2^{2m-1}}$ as discussed in Lemma~\ref{lemma:max_difference}. Then after choosing a subsequence $(N_k)$ there exists a set $N \subset [0,1]$ of measure zero such that
		\begin{align}
			\frac{1}{4}  \sum_{m=1}^{N_k} \left( a_m r_m(\omega) e_{\pi(2m)} - a_m r_m(\omega) e_{2m-1} \right) \xrightarrow[k \to \infty]{} h(\omega) \quad \text{for all } \omega \in N^C. \label{eq:ce_after_riesz}
		\end{align}
	Applying the coordinate functionals for $(e_m)_{m \in \IN}$ to \eqref{eq:ce_after_riesz} shows that for $\omega \in N^C$ the unique coefficients $(e^*_m(h(\omega)))$ of the expansion of $h(\omega)$ with respect to $(e_m)_{m \in \IN}$ satisfy $e^*_{2m-1}(h(\omega)) = -\frac{a_m}{4} r_m(\omega)$. Since $(e_m)_{m \in \IN}$ is unconditional,
	\begin{align*}
		\sum_{m=1}^{\infty} a_m r_m(\omega) e_{2m-1} \quad \text{and therefore} \quad \sum_{m=1}^{\infty} a_m e_{2m-1} \quad \text{converge.}
	\end{align*}
	This contradicts our assumptions and $(T(t))_{t \ge 0}$ cannot be $\mathcal{R}$-bounded on $[0,1]$.
\end{proof}

\begin{remark} Notice that in the spaces $L^p([0,1])$ $(p \in (1, \infty) \setminus \{2\})$ every unconditional basis is non-symmetric~\cite[Theorem~21.1]{Sin70}. So the above construction in particular works for the normalized Haar basis.
\end{remark}

The next proposition originally due to J. Lindenstrauss and M. Zippin~\cite[Note (1) at the end]{LinZip69} shows that the above result is applicable to almost all Banach spaces with an unconditional basis. We only present a sketch of the proof from~\cite[Theorem~7.6]{McA72} to show a link to the next section.

\begin{proposition}[Existence of non-symmetric Bases]\label{prop:non-symmetric} Let $X$ be a Banach space with an unconditional basis. If $X$ is not isomorphic to $c_0, \ell^1$ or $\ell^2$, then $X$ has a normalized unconditional, non-symmetric basis.
\end{proposition}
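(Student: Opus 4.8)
The plan is to argue by cases on the basis we are handed. If the given normalized unconditional basis $(e_m)_{m \in \IN}$ is already non-symmetric there is nothing to do, so I would assume throughout that $(e_m)$ is symmetric. The mechanism for \emph{detecting} non-symmetry that I would rely on is the standard fact that a symmetric basis is subsymmetric, hence equivalent to each of its subsequences; consequently any two subsequences indexed by complementary infinite sets are equivalent to one another. Therefore, to manufacture a non-symmetric unconditional basis of $X$ it suffices to produce one that possesses two complementary subsequences which fail to be equivalent.

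The second step is to split into the two cases governed by Zippin's theorem on perfectly homogeneous bases (see~\cite{AlbKal06}): a normalized unconditional basis is equivalent to all of its normalized block basic sequences if and only if it is equivalent to the unit vector basis of $c_0$ or of $\ell^p$ for some $1 \le p < \infty$. Suppose first that $(e_m)$ \emph{is} perfectly homogeneous. Then $X$ is isomorphic to $c_0$ or to some $\ell^p$, and since we have excluded $c_0$, $\ell^1$ and $\ell^2$ we are left with $X \cong \ell^p$ for $p \in (1,2) \cup (2,\infty)$. For these spaces a non-symmetric unconditional basis is produced by the non-standard representation of $\ell^p$ in Section~\ref{section:l_p}; this is precisely the link to that section. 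Equivalently, one may invoke that $c_0$, $\ell^1$ and $\ell^2$ are the only spaces with an unconditional basis that is unique up to permutation~\cite{LinZip69}, together with the uniqueness of the symmetric basis of $\ell^p$, to conclude that $\ell^p$ with $p \ne 1, 2$ must admit an unconditional basis that is not symmetric.

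It remains to treat the case where $(e_m)$ is \emph{not} perfectly homogeneous. Here Zippin's dichotomy hands us a finitely supported normalized block basic sequence $(u_k)$, say $u_k = \sum_{i \in I_k} a_i e_i$ with the blocks $I_k$ pairwise disjoint, which is not equivalent to $(e_m)$. Using the unconditionality of $(e_m)$ I would first note that the block subspace $[u_k]$ is complemented via the averaging projection $Px = \sum_k \bigl( \sum_{i \in I_k} b_i e_i^*\bigr)(x)\, u_k$ for a suitable dual block system with $\sum_{i \in I_k} b_i a_i = 1$. Exploiting the symmetry of $(e_m)$ I would then split the index set into two infinite halves, keep a copy of $(e_m)$ on one half, and assemble on the other half (together with the leftover of the complementation) an interleaved normalized unconditional system of $X$ whose two complementary subsequences are equivalent to $(e_m)$ and to $(u_k)$ respectively. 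As these two sequences are non-equivalent, the detection criterion from the first paragraph shows that the resulting basis is non-symmetric.

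The main obstacle I anticipate lies in this last assembly: one has to verify that the interleaved system is a genuine Schauder basis of \emph{all} of $X$, so that the spanning and the complementation bookkeeping fit together, while remaining normalized and unconditional. Once the two non-equivalent pieces really sit inside a single basis of the whole space, non-symmetry follows almost for free; it is making them coexist as a basis of $X$ that carries the technical weight, and this is exactly the circle of block-basis ideas developed for the $\ell^p$-spaces in Section~\ref{section:l_p}.
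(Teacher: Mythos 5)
Your treatment of the perfectly homogeneous case is exactly the paper's: Zippin's theorem reduces it to $X \simeq \ell^p$ with $p \in (1,\infty)\setminus\{2\}$, where the non-symmetric unconditional basis of $X^p = \oplus_{\ell^p}^n \ell^2_n$ from Section~\ref{section:l_p} finishes the argument. The genuine gap is your Case~B, and it is not mere bookkeeping: that case is where the entire content of the proposition sits. Note first that it is non-vacuous, since a symmetric basis need not be perfectly homogeneous (the unit vector bases of Lorentz sequence spaces $d(w,p)$ and of Orlicz sequence spaces are symmetric without being perfectly homogeneous), so it cannot be waved away. The paper does not prove this case by hand either; it assumes, for contradiction, that \emph{every} normalized unconditional basis of $X$ is symmetric and cites \cite[Theorem~7.6]{McA72} (going back to \cite{LinZip69}) for the implication that the given basis is then perfectly homogeneous. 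Your Case~B is precisely the contrapositive of that cited step, so by attempting it directly you have taken on the hard part of the theorem, and your sketch of it does not go through.

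Concretely: (1)~the averaging projection onto $[u_k]$ need not be bounded for a general block basis of a symmetric basis (boundedness is classical only for constant-coefficient blocks), and the domination estimates your interleaving bookkeeping would require can fail outright --- in $d(w,1)$ a normalized constant-coefficient block basis with rapidly increasing block lengths is equivalent to the unit vector basis of $\ell^1$ and strictly dominates the symmetric basis, so the map $e_k \mapsto u_k$ is unbounded there; (2)~even where $[u_k]$ is complemented, your interleaved system spans only $[u_k] \oplus [e_m : m \in A]$, and adjoining ``the leftover of the complementation'' presupposes that this complement carries a basis which merges with the two designated subsequences into a single normalized unconditional basis of \emph{all} of $X$ --- nothing in your sketch delivers this, and whether complemented subspaces of spaces with a basis even have a basis is in general a well-known open problem; (3)~the appeal to Section~\ref{section:l_p} is empty here, since that section treats only the concrete space $X^p$ and contains no general assembly lemma for block bases. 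To repair the argument, either genuinely reprove \cite[Theorem~7.6]{McA72}, or restructure as the paper does: assume all normalized unconditional bases of $X$ are symmetric, invoke that theorem to obtain perfect homogeneity, and then your Case~A yields the contradiction.
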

\begin{proof}
	Assume that every normalized unconditional Schauder basis for $X$ is symmetric. Let $(e_m)_{m \in \IN}$ be a normalized unconditional, hence symmetric, basis for $X$. Then it can be shown that $(e_m)_{m \in \IN}$ is equivalent to all of its normalized block basic sequences, i.e. $(e_m)_{m \in \IN}$ is perfectly homogeneous. By a famous theorem of M.~Zippin~\cite[Theorem~9.1.8]{AlbKal06}, $X$ is isomorphic to $c_0$ or $\ell^p$ for $1 \le p < \infty$. However, we will see in Section~\ref{section:l_p} the well-known result that for $p \in (1, \infty) \setminus \{2\}$ the spaces $\ell^p$ have a non-symmetric, unconditional Schauder basis.
\end{proof}

\begin{remark} Conversely, all normalized unconditional bases in the spaces $c_0$, $\ell^1$ and $\ell^2$ are symmetric for the simple reason that all normalized unconditional bases in these spaces are unique up to equivalence~\cite[Theorem~2.b.10]{LinTza77}.
\end{remark}

Now, the main result of this section is an easy consequence.

\begin{theorem}[Kalton-Lancien Revisited] Let $X$ be a Banach space with an unconditional basis. Assume that $X$ has (MRP). Then $X \simeq \ell^2$. More precisely, for $X \not\simeq \ell^2$ there exists a Schauder basis $(f_m)_{m \in \IN}$ for $X$ such that $-A$ given by
	\begin{align*}
		D(A) = \biggl\{ x = \sum_{m=1}^{\infty} a_m f_m: \sum_{m=1}^{\infty} 2^m a_m f_m \quad \mathrm{ exists} \biggr\} \\
		A \biggl( \sum_{m=1}^{\infty} a_m f_m \biggr) = \sum_{m=1}^{\infty} 2^m a_m f_m
	\end{align*}
	generates a holomorphic $C_0$-semigroup $(T(z))_{z \in \Sigma_{\pi/2}}$ on $X$ that is not $\mathcal{R}$-bounded on $[0,1]$ and in particular does not have maximal regularity.
\end{theorem}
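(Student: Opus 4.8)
The plan is to obtain the theorem as a case analysis built on the three counterexample constructions already established, and it suffices to prove the \emph{``more precisely''} part. Once, for every $X \not\simeq \ell^2$, we have produced a generator $-A$ of the stated multiplier form whose holomorphic semigroup is not $\mathcal{R}$-bounded on $[0,1]$, the first assertion follows by contraposition. Indeed, such a semigroup cannot be $\mathcal{R}$-analytic: since $(0,1] \subseteq \Sigma_\delta \cap \{\abs{z} \le 1\}$ for every $\delta > 0$, $\mathcal{R}$-analyticity would force $\R{T(t): 0 < t \le 1} < \infty$. Hence by Theorem~\ref{thm:r-maxreg_implies_r-analyticity} the operator $-A$ does not have maximal regularity, so $X$ fails (MRP), and therefore (MRP) can hold only when $X \simeq \ell^2$.

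First I would split the case $X \not\simeq \ell^2$ into the three exhaustive and mutually exclusive subcases (i)~$X \simeq c_0$, (ii)~$X \simeq \ell^1$, and (iii)~$X$ isomorphic to none of $c_0, \ell^1, \ell^2$. Case~(iii) is the heart of the matter and is already done: Proposition~\ref{prop:non-symmetric} supplies a normalized unconditional \emph{non-symmetric} basis, whereupon Proposition~\ref{prop:ce_constructor} directly produces a Schauder basis $(f_m)_{m \in \IN}$ together with the multiplier $A$ associated to $(2^m)_{m \in \IN}$ enjoying exactly the required properties.

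For the remaining cases~(i) and~(ii) I would transfer the explicit counterexamples of Propositions~\ref{prop:ce_c_0} and~\ref{prop:ce_ell_1} along an isomorphism. These two spaces genuinely require separate treatment, because on $c_0$, $\ell^1$ (and $\ell^2$) every normalized unconditional basis is symmetric, so Proposition~\ref{prop:ce_constructor} is inapplicable; the counterexamples there instead use the conditional summing basis, respectively difference basis. Concretely, if $\Phi \colon c_0 \to X$ is an isomorphism and $(s_m)_{m \in \IN}$ is the summing basis, I set $f_m \coloneqq \Phi s_m$. Then $(f_m)_{m \in \IN}$ is a Schauder basis of $X$, and writing $x = \sum_{m=1}^\infty a_m f_m = \Phi\bigl(\sum_{m=1}^\infty a_m s_m\bigr)$ one checks that $x \in D(A)$ if and only if $\Phi^{-1} x \in D(\tilde A)$ and that the multiplier $A$ on $X$ associated to $(f_m)_{m \in \IN}$ and $(2^m)_{m \in \IN}$ equals $\Phi \tilde A \Phi^{-1}$, where $\tilde A$ is the operator of Proposition~\ref{prop:ce_c_0}. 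Consequently the semigroups are conjugate, $T(z) = \Phi \tilde T(z) \Phi^{-1}$, and the identical procedure applies to $\ell^1$ via the difference basis of Proposition~\ref{prop:ce_ell_1}.

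Finally I would close cases~(i) and~(ii) by using that $\mathcal{R}$-boundedness is stable under conjugation by isomorphisms, so that $\R{T(t): 0 < t \le 1} \le \norm{\Phi}\,\norm{\Phi^{-1}}\,\R{\tilde T(t): 0 < t \le 1}$; since the $c_0$- (respectively $\ell^1$-) semigroup is not $\mathcal{R}$-bounded on $[0,1]$, neither is its conjugate on $X$. The only real obstacle is the bookkeeping around this transfer, namely verifying that the transported basis and multiplier have precisely the stated form and that the failure of $\mathcal{R}$-boundedness passes through the conjugation, and this is routine.
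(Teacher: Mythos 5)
Your proposal is correct and follows essentially the same route as the paper: the same three-way case split, with Proposition~\ref{prop:non-symmetric} plus Proposition~\ref{prop:ce_constructor} handling the generic case and Propositions~\ref{prop:ce_c_0} and~\ref{prop:ce_ell_1} handling $c_0$ and $\ell^1$. The only difference is that you spell out the transfer of the $c_0$ and $\ell^1$ counterexamples along an isomorphism $\Phi$ (conjugated basis, conjugated multiplier, and stability of $\mathcal{R}$-bounds under conjugation), a bookkeeping step the paper leaves implicit by saying those cases were ``shown by hand.''
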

\begin{proof}
	If $X$ is not isomorphic to $c_0$ or $\ell^1$, Proposition~\ref{prop:non-symmetric} shows that we can apply Proposition~\ref{prop:ce_constructor} which yields the desired counterexample. In the cases of $X \simeq c_0$ or $X \simeq \ell^1$ we showed the theorem by hand in Proposition~\ref{prop:ce_c_0} and Proposition~\ref{prop:ce_ell_1}. 
\end{proof}

\section{Counterexamples on \texorpdfstring{$L^p$}{L\_p}-spaces}\label{section:l_p}

In this section we want to show that the property of having maximal regularity does not extrapolate. In particular, we construct consistent semigroups $(T_p(t))_{t \ge 0}$ on $L^p$ for $p \in (1, \infty)$ with the following properties: $(T_2(t))_{t \ge 0}$ and therefore all $(T_p(t))_{t \ge 0}$ are holomorphic but $(T_p(t))_{t \ge 0}$ has maximal regularity if and only if $p = 2$. This is done by using a concrete non-symmetric basis for $\ell^p$ for which the general results of Section~\ref{sec:general_results} apply. The basis is obtained from a non-standard representation of the space $\ell^p$ using the following construction.

\begin{definition} Let $(X_n)_{n \in \IN}$ be a sequence of Banach spaces. Then for $p \in [1, \infty)$ the $\ell^p$-sum $\oplus_{\ell^p}^n X_n$ of $(X_n)_{n \in \IN}$ is given by	
	\[ \oplus_{\ell^p}^n X_n \coloneqq \biggl\{ (x_n) \in \prod_{n=1}^{\infty} X_n : \sum_{n=1}^{\infty} \norm{x_n}_{X_n}^p < \infty \biggr\}, \qquad \norm{(x_n)} \coloneqq \biggl( \sum_{n=1}^{\infty} \norm{x_n}_{X_n}^p \biggr)^{1/p}. \]
\end{definition}

	We will be interested in the spaces $X^p \coloneqq \oplus_{\ell^p}^n \ell^2_n$. Observe that the standard unit vector basis of $X^p$ seen as a sequence space is normalized and unconditional. For $p \in (1, \infty) \setminus \{2\}$ it is not equivalent to the standard basis of $\ell^p$. Indeed, for $1 < p < 2$ consider the sequence $x_k = \sqrt[p]{\frac{1}{2^n}}$ for $k = \frac{(2^n -1)2^n}{2} + 1, \ldots, \frac{2^n(2^n + 1)}{2}$ $(n \in \IN)$ and $x_k = 0$ in any other case. Then one has $\sum_{k=1}^{\infty} \abs{x_k}^p = \sum_{n=1}^{\infty} 2^n \frac{1}{2^n} = \infty$, but 
		\[ \norm{(x_k)}^p_{X^p} = \sum_{n=1}^{\infty} (2^{n(1-\frac{2}{p})})^{p/2} = \sum_{n=1}^{\infty} 2^{n(\frac{p}{2}-1)} < \infty \quad \text{as } p < 2. \]
	In the case $2 < p < \infty$ let $(x_k)_{k \in \IN}$ be the sequence obtained by inserting the sequence $(\frac{1}{\sqrt{k}})_{k \in \IN}$ into the set $\cup_{n=1}^{\infty} [\frac{(2^n -1)2^n}{2} + 1, \frac{2^n(2^n + 1)}{2}]$. Then $\sum_{k=1}^{\infty} \abs{x_k}^p = \sum_{k=1}^{\infty} k^{-p/2} < \infty$ as $p > 2$. However, one has $\sum_{k=\frac{(2^n-1)2^n}{2} +1}^{\frac{2^n(2^n+1)}{2}} \abs{x_k}^2 \ge \frac{1}{2}$ by the well-known argument for the divergence of the harmonic series. Hence, $\norm{(x_k)}_{X^p} = \infty$. 
	
	Moreover, using Pełczyński's decomposition technique, one can show that for fixed $p \in (1, \infty)$ one has $X^p \simeq \ell^p$. Since the standard basis $(e_m)_{m \in \IN}$ of $X^p$ is not equivalent to the standard basis of $\ell^p$ (for $p \in (1, \infty) \setminus \{2\})$, the general theory shows that $(e_m)_{m \in \IN}$ cannot be symmetric~\cite[Proposition~21.5]{Sin70}. More easily, choose $\pi(2m) = \frac{(m - 1)m}{2} + 1$ and use successively $\pi(2m+1)$ to fill up the rest. Then $[e_{\pi(2m)}]_{m \in \IN}$ is isometrically isomorphic to $\ell^p$ and versions of the counterexamples above show that $(e_{\pi(2m)})_{m \in \IN}$ is not equivalent to $(e_{2m})_{m \in \IN}$.
	Note that one sees directly that $X^2$ is isometrically isomorphic to $\ell^2$ and that the standard basis of $X^2$ is equivalent to the standard Hilbert space basis of $\ell^2$. 
	
One can now use Proposition~\ref{prop:ce_constructor} to obtain a counterexample on $X^p \simeq \ell^p$ for $p \in (1, \infty) \setminus \{2\}$. However, we want to do more: we want to define a consistent family of counterexamples on $X^p$ in the scale $p \in (1, \infty) \setminus \{2\}$. For this it is necessary to find explicitly $p$-independent choices of both the permutations and the bases $f_m$ used in the proof of Proposition~\ref{prop:ce_constructor}.

\begin{proposition}\label{prop:concrete_bases} Let $(e_m)_{m \in \IN}$ be the standard unit vector basis of $X^p$ ($p \in (1, \infty))$. Then there exists a $p$-independent permutation $\pi$ of the even numbers such that the choice $f_m^{\prime}$ yields semigroups without maximal regularity for $p \in (2, \infty)$.
\end{proposition}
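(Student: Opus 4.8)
The plan is to make every choice in the proof of Proposition~\ref{prop:ce_constructor} explicit and manifestly independent of $p$, and to arrange the non-equivalence so that the ``without loss of generality'' always falls on the side of $f_m'$. First I would fix the permutation combinatorially. Recalling that the standard basis of $X^p = \oplus_{\ell^p}^n \ell^2_n$ is grouped into consecutive blocks, the $j$-th block $\ell^2_j$ occupying the indices $\frac{(j-1)j}{2}+1, \ldots, \frac{j(j+1)}{2}$, I follow the hint and set $\pi(2m) = \frac{(m-1)m}{2}+1$, so that $e_{\pi(2m)}$ is the first basis vector of the $m$-th block, letting $\pi$ run through the remaining indices in increasing order on the odd positions. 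This definition involves no analytic data and is therefore the same for every $p$. Since the vectors $e_{\pi(2m)}$ lie one in each block and the blocks are combined in the $\ell^p$-sense, the closed span $[e_{\pi(2m)}]_{m \in \IN}$ is isometrically isomorphic to $\ell^p$; in particular $\sum_m a_m e_{\pi(2m)}$ converges in $X^p$ exactly when $(a_m) \in \ell^p$.

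Next I would analyse the competing subsequence $(e_{2m-1})_{m \in \IN}$. The odd indices distribute over the blocks, and the $j$-th block of size $j$ contains $d_j \sim j/2$ of them; hence $[e_{2m-1}]_{m \in \IN}$ is isometrically isomorphic to an $\ell^p$-sum $\oplus_{\ell^p}^j \ell^2_{d_j}$ of finite-dimensional Hilbert spaces with $d_j \to \infty$. This is precisely the structure that fails to be isometric to $\ell^p$ for $p \neq 2$, and the direction of the failure is dictated by $p$: for $p > 2$ one has $\norm{\cdot}_{\ell^2} \ge \norm{\cdot}_{\ell^p}$ on each block, so the $X^p$-type norm on $[e_{2m-1}]$ dominates the $\ell^p$-norm and convergence there is strictly harder than $\ell^p$-convergence. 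This is exactly why the statement is restricted to $p \in (2,\infty)$.

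Finally I would produce the witnessing sequence. Mimicking the $2 < p < \infty$ counterexample displayed above, I would place a harmonic-type profile $a_m \sim 1/\sqrt{k}$ into the odd indices filling up a sequence of blocks whose sizes $d_j$ grow. Because $\sum_k k^{-p/2} < \infty$ for $p > 2$ one gets $(a_m) \in \ell^p$, so $\sum_m a_m e_{\pi(2m)}$ converges; whereas within each chosen block the $\ell^2$-norm of the harmonic segment is bounded below (the divergence of the harmonic series), so that $\sum_m a_m e_{2m-1}$ has infinite $X^p$-norm and diverges. Thus $(e_{\pi(2m)})$ and $(e_{2m-1})$ are inequivalent with the expansion converging on the $\ell^p$-side $(e_{\pi(2m)})$, which is exactly the hypothesis under which Proposition~\ref{prop:ce_constructor} selects $f_m = f_m'$. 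Invoking that proposition with the $p$-independent $\pi$ and the choice $f_m'$ then yields, for every $p \in (2,\infty)$, a Schauder multiplier $-A$ generating a holomorphic $C_0$-semigroup that is not $\mathcal{R}$-bounded on $[0,1]$, hence without maximal regularity. I expect the main obstacle to be the bookkeeping in this last step: tracking which odd index lands in which block and verifying that the harmonic profile can be placed so that $(a_m) \in \ell^p$ while the per-block $\ell^2$-masses stay bounded below, giving divergence for all $p > 2$ from the single combinatorial permutation $\pi$.
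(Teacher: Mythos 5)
Your overall strategy (an explicit block-leader permutation, a harmonic-type witnessing sequence for $p>2$, then invoking Proposition~\ref{prop:ce_constructor} with the choice $f_m'$) is the same as the paper's, but your permutation is invalid, and this is a genuine gap rather than a bookkeeping issue. Both the statement of the proposition and the construction of $f_m'$ (via \cite[Proposition~23.2]{Sin70}, as used in Proposition~\ref{prop:ce_constructor}) require $\pi$ to be a permutation \emph{of the even numbers}: the odd-indexed vectors $e_{2m-1}$ stay in place and only the even-indexed vectors are permuted. Your choice $\pi(2m) = \frac{(m-1)m}{2}+1$ takes the values $1, 2, 4, 7, 11, 16, 22, 29, \ldots$, which contain infinitely many odd numbers, so it is not a permutation of $2\IN$. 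Concretely, $\pi(2) = 1$ gives $f_2' = e_{\pi(2)} + e_1 = 2e_1 = 2f_1'$, so your system $(f_m')$ is not even linearly independent; moreover, the even-indexed vectors that your $\pi$ shunts onto ``odd positions'' never appear in any $f_m'$ at all, so the system cannot be a basis of $X^p$.

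This cannot be repaired while keeping your key claim that every $e_{\pi(2m)}$ is a block leader: a permutation of the even numbers is surjective onto the even numbers, so the values $\pi(2m)$ must exhaust \emph{all} even numbers and cannot all be first elements of blocks. The paper's proof resolves exactly this tension by interleaving: it takes $b_k$ to be the first \emph{even} number of the block $B_k$, sends only every other even position to a block leader ($\pi(4k+2) = b_k$), and uses the positions $\pi(4k)$ to sweep up the remaining even numbers in increasing order. A consequence you would also have to absorb is that $[e_{\pi(2m)}]_{m \in \IN}$ is then \emph{not} isometric to $\ell^p$ (contrary to the assertion in your second sentence); instead, one restricts to coefficient sequences of the form $(a_1, 0, a_2, 0, \ldots)$, which are supported exactly on the block-leader positions. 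Such a sequence converges with respect to $(e_{\pi(2m)})_{m \in \IN}$ if and only if $(a_m) \in \ell^p$, while with respect to the odd vectors it lands on the indices $4j-3$, which fill up the blocks; placing the harmonic profile $1/\sqrt{k}$ in geometrically spaced blocks (as in the paper's earlier $2<p<\infty$ counterexample, so that each chosen block carries $\ell^2$-mass at least $\frac{1}{2}$ while the whole sequence lies in $\ell^p$) then gives divergence on the $(e_{2m-1})$-side for every $p > 2$. With the permutation and the witnessing sequence corrected in this way, the remainder of your argument goes through.
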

\begin{proof}
	The permutation $\pi$ of the even numbers is defined as follows. Let $b_0, b_1, b_2, \ldots$ be the first even numbers in the blocks $B_k \coloneqq [ \frac{(k-1)k}{2} + 1, \frac{k(k+1)}{2} ]$ $(k \in \IN)$, so $b_0 = 2$, $b_1 = 4$, $b_2 = 8$, $b_3 = 12$, $b_4 = 16$, $b_5 = 22$, $b_6 = 30$ and so on. Now, we define
		\[ \pi(m) = \begin{cases}
				m & m \text{ odd} \\
				b_k & m = 4k + 2 \\
				\min 2\IN \setminus (\{ b_n: n \in \IN \} \cup \pi ([1, m-1])) & m = 4k.
			 \end{cases}
		\]
	 The permutation $\pi$ jumps to the first even number of some block $B_k$ in every second permutation step of the even numbers and collects all other even numbers in the other steps. Notice that a sequence of the form $(a_1, 0, a_2, 0, a_3, 0, \ldots)$ converges with respect to $(e_{\pi(2m)})_{m \in \IN}$ if and only if $(a_m)_{m \in \IN} \in \ell^p$. This observation together with a slight modification of the above counterexamples shows that $(e_{\pi(2m)})_{m \in \IN}$ is not equivalent to $(e_{2m+1})_{m \in \IN}$.
	  
	 Moreover, the above arguments show that for $p \in (2, \infty)$ there exists a sequence $(a_m)_{m \in \IN}$ which converges with respect to $(e_{\pi(2m)})_{m \in \IN}$ but not with respect to $(e_{2m+1})_{m \in \IN}$. Thus in the case $p \in (2, \infty)$ one can use $(f_m^{\prime})_{m \in \IN}$ to construct a counterexample.
\end{proof}

The above arguments leave open what happens in the case $p \in (1,2)$. As we can construct a counterexample to the extrapolation problem without addressing this issue, we will postpone the discussion of this question to the end of the section.

Notice that we have not yet found a counterexample to the extrapolation problem although we have found consistent semigroups on $X^p$ with the desired properties. For this $X^p \simeq \ell^p$ is not sufficient because we also need the consistency of the isomorphisms for different $p$. Sadly, the usual argument using Pełczyński's decomposition technique does not seem to yield such consistent isomorphisms. In a different direction one could try to apply~Proposition~\ref{prop:ce_constructor} to the normalized Haar basis of $L^p$. This works perfectly for a fixed $p \in (1,\infty)$, but the Haar basis cannot be simultaneously normalized for all or two different choices of $p$. This issue was overlooked in the presentation given by the author in~\cite{Fac13}. 

However, there is a way to embed the above family of counterexamples on $X^p$ consistently into a scale of $L^p$-spaces.

\begin{theorem}\label{thm:l^p_counterexample}
	There exist consistent holomorphic $C_0$-semigroups $(T_p(z))_{z \in \Sigma_{\pi/2}}$ on $L^p((0, \infty))$ for $p \in (1, \infty)$ such that $(T_p(z))_{z \in \Sigma_{\pi/2}}$ does not have maximal regularity for $p \in (2,\infty)$. 
\end{theorem}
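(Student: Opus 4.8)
The plan is to transplant the family of counterexamples on $X^p = \oplus_{\ell^p}^n \ell^2_n$ from Proposition~\ref{prop:concrete_bases} into the scale $(L^p(0,\infty))_{p \in (1,\infty)}$ by a single, $p$-independent embedding, so that both the spaces and the semigroups become consistent. First I would decompose $(0,\infty) = \bigcup_{n=1}^{\infty} I_n$ into consecutive intervals of length one and place on each $I_n$ the rescaled Rademacher functions $g_{n,1}, \dots, g_{n,n}$, each with values in $\{-1,1\}$. Since these functions are $\pm 1$-valued on a unit interval, $\norm{g_{n,j}}_{L^p(I_n)} = 1$ for every $p$, and by Khintchine's inequality the span of $\{g_{n,j}\}_{j \le n}$ in $L^p(I_n)$ is isomorphic to $\ell^2_n$ with Khintchine constants independent of the block size. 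As the blocks have pairwise disjoint supports, $L^p$-norms combine across blocks as $\ell^p$-sums, so the assignment $e_{n,j} \mapsto g_{n,j}$ extends to an isomorphism $J_p$ of $X^p$ onto its closed range $Y_p \subset L^p(0,\infty)$. The decisive point is that the functions $g_{n,j}$ are fixed once and for all, so that $J_p$ and $J_q$ agree on finitely supported sequences.

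Next I would produce the semigroups. The blockwise Rademacher projection
	\[ Pf = \sum_{n=1}^{\infty} \sum_{j=1}^n \biggl( \int_{I_n} f g_{n,j} \biggr) g_{n,j} \]
is manifestly $p$-independent, and by Khintchine's inequality together with a duality argument it is bounded on every $L^p(0,\infty)$ with range $Y_p$; since the $g_{n,j}$ are $L^2(I_n)$-orthonormal one has $P^2 = P$. Writing $S_p$ for the holomorphic $C_0$-semigroup on $X^p$ from Proposition~\ref{prop:concrete_bases} and setting $\tilde T_p(z) \coloneqq J_p S_p(z) J_p^{-1}$ on $Y_p$, I would define
	\[ T_p(z) \coloneqq \tilde T_p(z) P + (\Id - P) \qquad (z \in \Sigma_{\pi/2}). \]
A short computation using $P^2 = P$ and the invariance of $Y_p$ shows that $(T_p(z))$ is a holomorphic $C_0$-semigroup acting as $\tilde T_p(z)$ on $Y_p$ and as the identity on $\ker P$; in particular each $T_p$ is holomorphic because $\tilde T_p$ is isomorphic to the holomorphic semigroup $S_p$.

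For consistency I would argue that the coefficient of $g_{n,j}$ in an element of $Y_p$ is recovered by integration against the fixed function $g_{n,j}$ — this uses precisely the $L^2(I_n)$-orthonormality of the $g_{n,j}$ and the disjointness of the blocks. Hence for $g \in Y_p \cap Y_q$ the expansion coefficients with respect to the transplanted basis $(J_\bullet f_m')$ do not depend on the exponent, so $\tilde T_p(z)$ and $\tilde T_q(z)$ agree on $Y_p \cap Y_q$; combined with the $p$-independence of $P$ this yields $T_p(z) = T_q(z)$ on $L^p(0,\infty) \cap L^q(0,\infty)$. Finally, to see that maximal regularity fails for $p \in (2,\infty)$, suppose the generator of $(T_p(z))$ had maximal regularity. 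By Theorem~\ref{thm:r-maxreg_implies_r-analyticity} it would then be $\mathcal{R}$-analytic, so in particular $\R{T_p(t): t \in (0,1]} < \infty$. Restricting this family to the complemented invariant subspace $Y_p$ via $S_p(t) = J_p^{-1} P\, T_p(t)\, J_p$ and using that $\mathcal{R}$-boundedness is preserved under composition with fixed bounded operators, $\{S_p(t): t \in (0,1]\}$ would be $\mathcal{R}$-bounded on $X^p$, contradicting Proposition~\ref{prop:concrete_bases} (through the mechanism of Proposition~\ref{prop:ce_constructor}).

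The main obstacle is the consistent embedding itself: one must choose the $g_{n,j}$ so that a single projection and a single family of coordinate functionals serve all $p$ at once. The Rademacher-block construction is engineered exactly for this — the blockwise $L^2$-orthonormality makes the coordinate functionals, and hence the transplanted semigroup, genuinely $p$-independent, while Khintchine's inequality and the disjoint supports provide the isomorphisms $X^p \simeq Y_p$ and the boundedness of $P$. Verifying these structural requirements simultaneously, rather than the semigroup estimates themselves, is where the real work lies.
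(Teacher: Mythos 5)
Your proposal is correct and follows essentially the same route as the paper: you embed $X^p = \oplus_{\ell^p}^n \ell^2_n$ into $L^p((0,\infty))$ via Rademacher blocks with Khintchine's inequality providing uniformly bounded, $p$-independent isomorphisms, use the explicit blockwise Rademacher projection (which the paper also invokes, citing its uniform boundedness on $L^p$) to obtain a consistent complemented decomposition, and extend the semigroups of Proposition~\ref{prop:concrete_bases} by the identity on the complement --- your formula $T_p(z) = \tilde{T}_p(z)P + (\Id - P)$ is just the paper's direct sum $i_p \circ T_p(z) \circ i_p^{-1} \oplus \Id$ written out. Your final step, transferring $\mathcal{R}$-boundedness back to $X^p$ through the fixed bounded operators $J_p$, $J_p^{-1}$ and $P$, is a correct (and slightly more detailed) justification of the paper's remark that maximal regularity of the extended semigroup is equivalent to that of the original one.
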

\begin{proof}
	Let $r_1, r_2, r_3, \ldots$ be the Rademacher functions. Notice that for each $n$ one has an isomorphism 
	\begin{align*}
		\ell^2_n \simeq \linspan \{r_1, \ldots, r_n \} \eqqcolon \Rad_n \qquad (a_1, \ldots, a_n) \mapsto \sum_{k=1}^n a_k r_k.
	\end{align*}
	It follows from this explicit representation that the above isomorphisms are consistent. Moreover, by Khintchine's inequality there exist $C_p > 0$ such that for all $n \in \IN$, $a_1, \ldots, a_n \in \IC$ 
		\[ C_p^{-1} \biggl( \sum_{k=1}^n \abs{a_k}^2 \biggr)^{1/2} \le \biggnorm{\sum_{k=1}^n a_k r_k}_{L^p([0,1])} \le C_p \biggl( \sum_{k=1}^n \abs{a_k}^2 \biggr)^{1/2}. \]
	Therefore the isomorphisms are uniformly bounded in $n$. Hence, one has consistent isomorphisms $i_p: X^p = \oplus_{\ell^p}^n \ell^2_n \xrightarrow{\sim} \oplus_{\ell^p}^n \Rad_n$. The right hand side is a subspace of $\oplus_{\ell^p}^n L^p([0,1]) \simeq L^p((0,\infty))$. We now show that there exist consistent projections $Q^p$ from $L^p((0, \infty))$ onto this subspace. Indeed, for a fixed $p \in (1,\infty)$ the space $\Rad_n$ in $L^p([0,1])$ is uniformly complementable in $n$~\cite[Theorem~1.12(c) and comments after the proof]{DJT95}, where the projections explicitly given by
		\[ P_n: f \mapsto \sum_{k=1}^n r_k \int_0^1 f(\omega) r_k(\omega) \, d\omega \]
	are consistent for all $p$. By the uniform complementability one obtains consistent projections $Q^p$ as desired. From these we obtain consistent decompositions $L^p((0,\infty)) \simeq (\oplus_{\ell^p}^n \Rad_n) \oplus Z_p$. Let $(T_p(z))_{z \in \Sigma_{\pi / 2}}$ be the family of semigroups obtained from Proposition~\ref{prop:concrete_bases}. Using the above decomposition we can define consistent holomorphic semigroups $(S_p(z))_{z \in \Sigma_{\pi/2}}$ ($p \in (1, \infty)$) on $L^p((0, \infty))$ by
		\[ S_p(z) \coloneqq i_p \circ T_p(z) \circ i_p^{-1} \oplus \Id. \]
	Clearly, $(S_p(z))_{z \in \Sigma_{\pi/2}}$ has maximal regularity if and only if $(T_p(z))_{z \in \Sigma_{\pi/2}}$ has maximal regularity. Hence, by Proposition~\ref{prop:concrete_bases} $(S_p(z))_{z \in \Sigma_{\pi/2}}$ does not have maximal regularity for $p \in (2, \infty)$.
\end{proof}

We can now easily modify the above counterexample to obtain the main result of this section.

\begin{corollary}[Maximal Regularity does not extrapolate]\label{cor:lp_counterexample+} There exist consistent holomorphic $C_0$-semigroups $(R_p(z))_{z \in \Sigma_{\pi/2}}$ on $L^p(\IR)$ for $p \in (1, \infty)$ such that $(R_p(z))_{z \in \Sigma_{\pi/2}}$ has maximal regularity iff $p = 2$. 
\end{corollary}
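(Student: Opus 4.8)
The plan is to leverage Theorem~\ref{thm:l^p_counterexample}, which already provides consistent holomorphic semigroups $(S_p(z))_{z \in \Sigma_{\pi/2}}$ on $L^p((0,\infty))$ without maximal regularity for $p \in (2,\infty)$, and to upgrade it in two directions: first, to cover the remaining range $p \in (1,2)$, and second, to move from $L^p((0,\infty))$ to $L^p(\IR)$. The natural idea is to exploit duality. Since maximal regularity of a semigroup on a reflexive space is equivalent to maximal regularity of the adjoint semigroup on the dual space, and $L^p(\IR)^* \simeq L^{p'}(\IR)$ with $\frac{1}{p} + \frac{1}{p'} = 1$, I expect the failure of maximal regularity for $p \in (2,\infty)$ to transfer, via the adjoints, to a failure for the dual exponents $p' \in (1,2)$. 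This would close the gap left open after Proposition~\ref{prop:concrete_bases}.

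\textbf{Construction on $L^p(\IR)$.}

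First I would split $\IR = (-\infty,0) \cup (0,\infty)$, so that $L^p(\IR) \simeq L^p((-\infty,0)) \oplus L^p((0,\infty))$ consistently in $p$, and use the reflection $t \mapsto -t$ to identify $L^p((-\infty,0)) \simeq L^p((0,\infty))$ isometrically and consistently. On the first copy I would place the semigroup $(S_p(z))$ from Theorem~\ref{thm:l^p_counterexample}, which kills maximal regularity for $p \in (2,\infty)$. On the second copy I would place the adjoint-type construction that kills maximal regularity for $p \in (1,2)$: concretely, I take the family $(S_{p'}(z))$ built on $L^{p'}((0,\infty))$ and pass to adjoints $S_{p'}(z)^* = S_p(z)^*$ acting on $L^p((0,\infty))$, which are consistent in $p$ because the $S_{p'}$ are. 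The desired semigroup is then
\[ R_p(z) \coloneqq S_p(z) \oplus S_{p'}(z)^* \qquad (z \in \Sigma_{\pi/2}), \]
acting on the two copies. Each summand is holomorphic (adjoints of holomorphic semigroups on reflexive spaces remain holomorphic), hence so is $R_p$, and consistency in $p$ is inherited from the consistency of both families.

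\textbf{Verifying the maximal regularity dichotomy.}

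For the equivalence, I would argue that $R_p$ has maximal regularity if and only if both summands do. For $p \in (2,\infty)$ the first summand $S_p$ already fails to have maximal regularity by Theorem~\ref{thm:l^p_counterexample}. For $p \in (1,2)$, the dual exponent satisfies $p' \in (2,\infty)$, so $S_{p'}(z)$ on $L^{p'}((0,\infty))$ fails maximal regularity; since maximal regularity passes to the adjoint semigroup on the reflexive dual space, the second summand $S_{p'}(z)^*$ on $L^p((0,\infty))$ also fails, and hence so does $R_p$. Finally, at $p = 2$ we have $L^2(\IR)$ a Hilbert space, so by de~Simon's result every generator of a holomorphic semigroup — in particular $R_2$ — has maximal regularity. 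This gives exactly the dichotomy: $R_p$ has maximal regularity iff $p = 2$.

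\textbf{Main obstacle.}

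The step I expect to require the most care is the adjoint transfer of maximal regularity and, relatedly, the precise book-keeping of consistency under passage to adjoints. One must check that the adjoints $S_{p'}(z)^*$ form a genuine $C_0$-semigroup (strong continuity of adjoint semigroups can fail on non-reflexive spaces, but here $L^p$ is reflexive, so this is fine) and that the consistency relations $S_{p'}(z)^* = S_{p''}(z)^*$ on overlapping $L^{p'} \cap L^{p''}$ are inherited correctly from the consistency of the $S_{p'}$ under the duality pairing. Once the equivalence of maximal regularity for a semigroup and its adjoint on a reflexive space is in hand — which follows from the $\mathcal{R}$-analyticity characterization, since $\mathcal{R}$-boundedness of a family dualizes to $\mathcal{R}$-boundedness of the adjoint family — the rest is routine assembly.
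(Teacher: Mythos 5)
Your proposal is correct and follows essentially the same route as the paper: there the counterexample is also defined as the direct sum $R_p(z) = S_p(z) \oplus S_p^*(z)$ of the semigroups from Theorem~\ref{thm:l^p_counterexample} with their adjoint family, which fails maximal regularity for $p \in (1,2)$ by the duality of maximal regularity on reflexive spaces (cited there as~\cite[Corollary~2.11]{KunWei04}). The only difference is cosmetic: you justify the duality step via $\mathcal{R}$-analyticity and dualization of $\mathcal{R}$-bounds (valid on $L^p$, $1<p<\infty$) and spell out the splitting $L^p(\IR) \simeq L^p((0,\infty)) \oplus L^p((0,\infty))$, whereas the paper leaves these points to a citation and to the reader.
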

\begin{proof}
	Let $(S_p^*(z))_{z \in \Sigma_{\pi/2}}$ be the adjoint semigroups (which are again strongly continuous and holomorphic) of the semigroups $(S_p(z))_{z \in \Sigma_{\pi/2}}$ constructed in Theorem~\ref{thm:l^p_counterexample}. By~\cite[Corollary~2.11]{KunWei04}, $(S_p^*(z))_{z \in \Sigma_{\pi/2}}$ does not have maximal regularity for $p \in (1,2)$. Now, the direct sum $R_p(z) = S_p(z) \oplus S_p^*(z)$ of both has the desired properties.
\end{proof}

\begin{remark}
	Maximal regularity extrapolates from $(T_2(t))_{t \ge 0}$ on $L^2$ to $(T_p(t))_{t \ge 0}$ on $L^p$ under the additional assumption $\R{T_p(t): 0 < t < 1} < \infty$ by~\cite[Theorem~4.1]{AreBu03} or~\cite[Corollary~6.2]{Fac13+}. The above counterexample shows that we cannot omit this assumption.
\end{remark}

We now return to the basis $f_m^{\prime}$ obtained from Proposition~\ref{prop:concrete_bases}. In the case $p \in (1,2)$ left open before we have the following technical result.

\begin{proposition} The basis $f_m^{\prime}$ constructed in Proposition~\ref{prop:concrete_bases} is unconditional for $p \in (1, 2]$. \end{proposition}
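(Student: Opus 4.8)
The plan is to reduce the unconditionality of $(f_m')$ to the boundedness on $X^p$ of a single coordinate-transport operator and then to exploit the elementary embedding $\ell^2 \hookrightarrow \ell^p$, which holds precisely for $p \le 2$. First I would expand $x = \sum_m a_m f_m'$ in the standard basis $(e_k)$ of $X^p$; this basis is $1$-unconditional and the norm of $X^p = \oplus_{\ell^p}^n \ell^2_n$ is monotone in the moduli of the $e_k$-coordinates. Since $f_{2j-1}' = e_{2j-1}$ and $f_{2j}' = e_{\pi(2j)} + e_{2j-1}$, and since $\pi$ permutes the even numbers while $2j-1$ runs through the odd numbers, the coordinates of $x$ are $c_{2j-1} = a_{2j-1} + a_{2j}$ at the odd positions and $c_{\pi(2j)} = a_{2j}$ at the even positions, the two families being disjoint. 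Passing from $(a_m)$ to $(\epsilon_m a_m)$ with $\abs{\epsilon_m} \le 1$ leaves each even coordinate unchanged in modulus and replaces $c_{2j-1}$ by $\epsilon_{2j-1} c_{2j-1} + (\epsilon_{2j} - \epsilon_{2j-1}) c_{\pi(2j)}$. Because $\abs{\epsilon_{2j} - \epsilon_{2j-1}} \le 2$, monotonicity of the norm and the triangle inequality yield

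\[ \biggnorm{\sum_m \epsilon_m a_m f_m'} \le \norm{x} + 2\norm{Sw}, \]

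where $w = (\abs{c_k})_k$ (so $\norm{w} = \norm{x}$) and $S$ is the operator $(Sw)_{2j-1} = w_{\pi(2j)}$, $(Sw)_k = 0$ for $k$ even, which moves the mass at the even position $\pi(2j)$ to the odd position $2j-1$. Hence it suffices to prove $\norm{S}_{\mathcal{L}(X^p)} < \infty$.

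The only analytic input is $\norm{\cdot}_{\ell^2} \le \norm{\cdot}_{\ell^p}$ on finitely supported sequences when $p \le 2$; the rest is the block geometry of $\pi$. I would split $S$ by the parity of $j$. For $j = 2k+1$ one has $\pi(2j) = b_k$, the first even number of the block $B_{k+2}$, sent to position $4k+1$; for $j = 2k$ one has $\pi(2j) = f_k$, the $k$-th smallest even number that is not one of the $b_n$, sent to position $4k-1$. The two output families lie in the residue classes $1$ and $3$ modulo $4$, hence occupy disjoint coordinates inside every block, so it is enough to bound the pieces $S_1$ (the $b$-part) and $S_2$ (the filler part) separately.

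For $S_1$ the decisive point is that the $b_k$ sit in pairwise distinct blocks $B_{k+2}$, whence $\sum_k \abs{z_{b_k}}^p \le \norm{z}_{X^p}^p$; on the output side the positions $4k+1$ falling into a fixed block $B_m$ are gathered into one $\ell^2$-sum, which by $\ell^2 \hookrightarrow \ell^p$ is dominated by the corresponding $\ell^p$-sum. Combining the two gives $\norm{S_1 z}_{X^p} \le \norm{z}_{X^p}$. For $S_2$ I would first record two elementary counting facts: since $f_k$ is comparable to $2k$ it lands in a block of index comparable to $2\sqrt{k}$, while $4k-1$ lands in a block of index comparable to $2\sqrt{2k}$, so each input block feeds fillers into at most boundedly many output blocks and each output block is fed by at most boundedly many input blocks. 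This bounded overlap together with $\ell^2 \hookrightarrow \ell^p$ again yields $\norm{S_2 z}_{X^p} \le C\norm{z}_{X^p}$. Adding the two estimates bounds $S$, and the proposition follows.

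The main obstacle I expect is not the functional-analytic reduction but the combinatorial bookkeeping for $S_2$: from the explicit definition of $\pi$ in Proposition~\ref{prop:concrete_bases} one must verify that the fillers are exactly the non-$b$ even numbers in increasing order and then pin the block indices down sharply enough to turn the heuristic ``$2\sqrt{k}$ versus $2\sqrt{2k}$'' into a genuine statement that the overlap constant is at most two in each direction. The estimate for $S_1$ is clean once the one-$b$-per-block property is isolated, and it is the conceptual heart of the matter: it is precisely the inclusion $\ell^2 \hookrightarrow \ell^p$ for $p \le 2$ that makes $(f_m')$ unconditional here and that fails for $p > 2$, where $S_1$ is unbounded and $(f_m')$ indeed produces a counterexample.
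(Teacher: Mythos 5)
Your proposal is correct and is essentially the paper's own argument recast in operator form: the identity $\epsilon_{2j-1}a_{2j-1}+\epsilon_{2j}a_{2j}=\epsilon_{2j-1}(a_{2j-1}+a_{2j})+(\epsilon_{2j}-\epsilon_{2j-1})a_{2j}$ isolating the transported term, the split of that term into the $b_k$-part (one $b_k$ per block, then the norm inequality $\norm{\cdot}_{\ell^2}\le\norm{\cdot}_{\ell^p}$ for $p\le 2$) and the filler part, and the bounded block-overlap count for the fillers all match the paper, the overlap count being precisely the proof of Lemma~\ref{lem:technical_lemma}. So: same approach, correctly executed.
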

\begin{proof} 
	Assume that $\sum_{m=1}^{\infty} a_m f_m^{\prime}$ converges. We have to show that $\sum_{m=1}^{\infty} \epsilon_m a_m f_m^{\prime}$ converges for any choice of signs $(\epsilon_m)_{m \in \IN} \in \{-1,1\}^{\IN}$. Obviously, the even part $\sum_{m=1}^{\infty} \epsilon_{2m} a_{2m} e_{\pi(2m)}$ converges as there is no interference between two different components of $(e_m)_{m \in \IN}$. For the odd part we have to check the convergence of the series
	 	\begin{align*}
			\MoveEqLeft \biggl( \sum_{k=1}^{\infty} \biggl( \sum_{\substack{l \in B_k \\ l \text{ odd}}} \abs{\epsilon_l a_l + \epsilon_{l+1} a_{l+1}}^2 \biggr)^{p/2} \biggr)^{1/p} \le \biggl( \sum_{k=1}^{\infty} \biggl( \sum_{\substack{l \in B_k \\ l \text{ odd}}} \abs{a_l + a_{l+1}}^2 \biggr)^{p/2} \biggr)^{1/p} \\
			& + \biggl( \sum_{k=1}^{\infty} \biggl( \sum_{\substack{l \in B_k \\ l \text{ odd}}} \abs{(\epsilon_{l+1} - \epsilon_l)a_{l+1}}^2 \biggr)^{p/2} \biggr)^{1/p}
		\end{align*} 
		
	The first term converges by assumption. Again, we split the second term. First notice that $(a_{4m+2})_{m \in \IN} \in \ell^p$. Observe that for $p \le 2$ we have the inclusion $\ell^p \hookrightarrow \ell^2$ which yields $\ell^p \hookrightarrow \oplus_{\ell^p}^n \ell^2_n$. From this inclusion we deduce that the part of the second term where $l$ runs over the numbers $l$ with $l+1 \equiv 2 \mod 4$ converges. Finally, we have to show that the part of the second term where $l$ runs over the numbers with $l+1 \equiv 0 \mod 4$ converges. This part is built from the convergent series
		\[ \sum_{m=1}^{\infty} a_{4m} e_{\pi(4m)} \]
	by eventually inserting zeros. The following lemma shows that this procedure does not destroy the convergence and finishes the proof.
\end{proof}

Let $(a_m)_{m \in \IN}$ be a sequence and $(b_m) = (0, \ldots, 0, a_1, 0, \ldots, 0, a_2, \ldots)$ be a sequence built from $(a_m)_{m \in \IN}$ by inserting zeros. We can then introduce a mapping $\phi: \IN \to \IN$ which maps $k$ to the position of $a_k$ in the new sequence $(b_m)_{m \in \IN}$.

\begin{lemma}\label{lem:technical_lemma} Let $(a_m)_{m \in \IN} \in X^p$ ($1 \le p < \infty$), $(b_m)_{m \in \IN}$ and $\phi: \IN \to \IN$ be as above and suppose that 
	\[ M \coloneqq \sup_{k \in \IN} \phi(k+1) - \phi(k) < \infty. \]
	Then $(b_m)_{m \in \IN} \in X^p$ as well. 
\end{lemma}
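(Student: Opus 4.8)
The plan is to compare the two block decompositions of $X^p$ directly and reduce everything to a bounded-overlap estimate. Write $\beta(j)$ for the index of the block $B_{\beta(j)}$ containing the position $j$, so that $\beta(j) \asymp \sqrt{2j}$, and recall that $\norm{(x_l)}_{X^p}^p = \sum_{n=1}^\infty \bigl(\sum_{l \in B_n}\abs{x_l}^2\bigr)^{p/2}$. Since $\phi$ is strictly increasing, its preimage $S_n := \{k : \phi(k) \in B_n\}$ is an interval of consecutive integers with $\abs{S_n} \le \abs{B_n} = n$, and since $b_{\phi(k)} = a_k$ while every other entry of $(b_m)$ vanishes,
\[ \norm{(b_m)}_{X^p}^p = \sum_{n=1}^\infty \biggl(\sum_{k \in S_n}\abs{a_k}^2\biggr)^{p/2}. \]
The task is to bound this by $\norm{(a_m)}_{X^p}^p = \sum_{m} \bigl(\sum_{k \in B_m}\abs{a_k}^2\bigr)^{p/2}$ up to a finite constant.

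The two facts I would establish, both consequences of the gap bound $k \le \phi(k) \le \phi(1) + M(k-1)$ together with $\beta(j) \asymp \sqrt{2j}$, are: (a) for all $n$ beyond a threshold depending on $\phi$, the interval $S_n$ meets at most $C\sqrt M + 2$ original blocks $B_m$; and (b) for every $m$, the image $\phi(B_m)$ of a single original block meets at most $M+1$ target blocks $B_n$. For (b) one notes that $\phi(B_m)$ spans a range of length at most $M(m-1)$ while lying in target blocks of index $\ge \beta(\phi(\min B_m)) \ge m$, hence of length $\ge m$. For (a) one uses that for large $n$ every $k \in S_n$ satisfies $k \gtrsim n^2/M$, so the original blocks there have length $\gtrsim n/\sqrt M$, and an interval of length $\le n$ can meet only boundedly many of them.

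Granting (a) and (b), the estimate is a routine regrouping. For the large blocks I apply the elementary inequality $\bigl(\sum_{i=1}^L t_i\bigr)^{p/2} \le \max(1, L^{p/2-1})\sum_i t_i^{p/2}$ with $L = C\sqrt M + 2$ to split each term over the original blocks met by $S_n$:
\[ \biggl(\sum_{k \in S_n}\abs{a_k}^2\biggr)^{p/2} \le c_{M,p}\sum_{m : B_m \cap S_n \neq \emptyset}\biggl(\sum_{k \in B_m}\abs{a_k}^2\biggr)^{p/2}, \qquad c_{M,p} := \max\bigl(1, L^{p/2-1}\bigr). \]
Summing over these $n$ and interchanging the order of summation, the multiplicity with which a fixed $B_m$ occurs equals the number of target blocks met by $\phi(B_m)$, which is at most $M+1$ by (b); this bounds the contribution of the large blocks by $c_{M,p}(M+1)\norm{(a_m)}_{X^p}^p$. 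The finitely many remaining blocks $B_n$ below the threshold each contribute $\bigl(\sum_{k \in S_n}\abs{a_k}^2\bigr)^{p/2}$, a finite sum over the finite set $S_n$, hence a finite total. Adding the two parts yields $\norm{(b_m)}_{X^p} < \infty$, so $(b_m) \in X^p$.

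I expect the main obstacle to be the clean verification of the overlap counts (a) and (b): one must track how the quadratic growth of the block lengths $\abs{B_n} = n$ interacts with the linear distortion $\phi(k) \asymp k$ forced by the gap bound, and in particular isolate the finitely many low-index target blocks where the comparison $\beta(\phi(k)) \asymp \beta(k)$ degenerates, since the initial offset $\phi(1)$ is left unconstrained by the hypothesis. Once the counts are uniform for large blocks, the remaining passage from $(a_m)$ to $(b_m)$ is the purely combinatorial regrouping controlled by the power inequality above.
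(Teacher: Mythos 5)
Your proof is correct, and its skeleton is the same as the paper's: compare the two block decompositions of $X^p$ via bounded-overlap counts in both directions, then regroup with the elementary power inequality $\bigl(\sum_{i=1}^L t_i\bigr)^{p/2} \le \max(1,L^{p/2-1})\sum_i t_i^{p/2}$ (the paper's constant $C$ with $(\alpha^2+\beta^2+\gamma^2)^{1/2} \le C(\alpha^p+\beta^p+\gamma^p)^{1/p}$ is exactly this inequality for $L=3$). The execution differs in two ways worth recording. First, the paper reduces to $M=2$ and handles general $M$ by iterating the insertion, whereas you treat general $M$ in one pass with the asymmetric counts $M+1$ (image direction) and $C\sqrt{M}+2$ (preimage direction). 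Second, and more substantively, your threshold device is not cosmetic: since the hypothesis places no constraint on the initial offset $\phi(1)$, the preimage $S_n=\phi^{-1}(B_n)$ of a low-index target block can land among very short original blocks --- for a pure shift by a large $N$, the target block with $n \approx \sqrt{2N}$ has preimage near the origin meeting on the order of $\sqrt{n}$ original blocks --- so a uniform bound such as the paper's ``at most three different $A$ for each $B$'' is only valid after discarding finitely many low-index $n$. This is precisely what your threshold accomplishes and what the paper's appeal to ``considering the worst cases'' glosses over (implicitly it treats the offset as if it, too, were bounded by $M$). The price you pay is the separate, trivial treatment of the finitely many exceptional blocks, each contributing a finite amount; what you gain is an argument valid for every $M$ and every offset without induction, which is in this one respect tighter than the published proof.
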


\begin{proof} 
	It suffices to consider the case $M = 2$ as the general case then follows inductively. Let $\mathcal{B} \coloneqq \{ B_n: n \in \IN \}$ be the set of all blocks. By considering the worst cases, one sees that for each $A \in \mathcal{B}$ there exist at most three different $B \in \mathcal{B}$ such that $\phi(A) \cap B \neq  \emptyset$ and likewise for each $B \in \mathcal{B}$ there exist at most three different $A \in \mathcal{B}$ such that $\phi(A) \cap B \neq \emptyset$. Choose $C \ge 1$ such that for every $\alpha, \beta, \gamma \in \IR$ one has $(\alpha^2 + \beta^2 + \gamma^2)^{1/2} \le C (\alpha^p + \beta^p + \gamma^p)^{1/p}$. Then for each $B \in \mathcal{B}$ one has
		\begin{align*}
			\biggl( \sum_{m \in B} \abs{b_m}^2 \biggr)^{1/2} \le \biggl( \sum_{\substack{A \in \mathcal{B} \\ \phi(A) \cap B \neq \emptyset}} \sum_{m \in A}\abs{a_m}^2 \biggr)^{1/2} \le C \biggl( \sum_{\substack{A \in \mathcal{B} \\ \phi(A) \cap B \neq \emptyset}} \biggl( \sum_{m \in A} \abs{a_m}^2 \biggr)^{p/2} \biggr)^{1/p}
		\end{align*}
	Therefore one obtains
		\begin{align*}
			\norm{(b_m)}_{X^p}^p & = \sum_{B \in \mathcal{B}} \biggl( \sum_{m \in B} \abs{b_m}^2 \biggr)^{p/2} \le C^p \sum_{B \in \mathcal{B}} \sum_{\substack{A \in \mathcal{B} \\ \phi(A) \cap B \neq \emptyset}} \biggl( \sum_{m \in A} \abs{a_m}^2 \biggr)^{p/2} \\
			& = C^p \sum_{A \in \mathcal{B}} \sum_{\substack{B \in \mathcal{B} \\ \phi(A) \cap B \neq \emptyset}} \bigg( \sum_{m \in A} \abs{a_m}^2 \biggr)^{p/2} \le 3C^p \sum_{A \in \mathcal{B}} \biggl( \sum_{m \in A} \abs{a_m}^2 \biggr)^{p/2}. \qedhere
		\end{align*}
\end{proof}

\begin{corollary} There exist consistent holomorphic $C_0$-semigroups $(T_p(z))_{z \in \Sigma_{\pi/2}}$ on $L^p((0, \infty))$ for $p \in (1, \infty)$ such that $(T_p(z))_{z \in \Sigma_{\pi/2}}$ has maximal regularity iff $p \in (1,2]$ (iff $p \in [2,\infty)$). 
\end{corollary}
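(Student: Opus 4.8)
The plan is to observe that the two results stated just above this corollary upgrade the one-sided conclusion of Theorem~\ref{thm:l^p_counterexample} into a sharp characterization, after which everything is an assembly of earlier constructions. Let $(T_p(z))_{z \in \Sigma_{\pi/2}}$ again denote the multiplier semigroup associated to the sequence $(2^m)_{m \in \IN}$ with respect to the $p$-independent basis $(f_m^{\prime})_{m \in \IN}$ of $X^p$ from Proposition~\ref{prop:concrete_bases}. That proposition already gives that $-A$ has no maximal regularity for $p \in (2,\infty)$. For the complementary range I would invoke the proposition shown just above, that $(f_m^{\prime})_{m \in \IN}$ is unconditional for $p \in (1,2]$: since the multiplier sequence $(2^m)_{m \in \IN}$ lies on the positive real axis, hence in $\Sigma_\delta$ for every $\delta < \tfrac{\pi}{2}$, and $X^p$ is a UMD-space for $p \in (1,\infty)$, Theorem~\ref{thm:unconditionality_implies_maxreg} applies and yields maximal regularity of $-A$ for $p \in (1,2]$. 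Combining the two ranges shows that $(T_p(z))$ has maximal regularity if and only if $p \in (1,2]$.

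To place this on a scale of $L^p$-spaces I would run the transference construction of Theorem~\ref{thm:l^p_counterexample} verbatim: using the consistent isomorphisms $i_p \colon X^p \xrightarrow{\sim} \oplus_{\ell^p}^n \Rad_n$ and the consistent complemented embedding of $\oplus_{\ell^p}^n \Rad_n$ into $L^p((0,\infty))$, set $S_p(z) \coloneqq i_p \circ T_p(z) \circ i_p^{-1} \oplus \Id$. As recorded in that proof, each $(S_p(z))$ is a consistent holomorphic $C_0$-semigroup, and it has maximal regularity precisely when $(T_p(z))$ does. Hence the family $(S_p(z))$ already has maximal regularity if and only if $p \in (1,2]$, which settles the first alternative.

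The second alternative is obtained by passing to conjugate exponents and adjoints, exactly as in Corollary~\ref{cor:lp_counterexample+}. I would define $R_p(z) \coloneqq \bigl(S_{p'}(z)\bigr)^*$ with $p' = \tfrac{p}{p-1}$; since $L^{p'}$ is reflexive, $R_p(z)$ is again a strongly continuous holomorphic semigroup, now acting on $(L^{p'})^* \cong L^p((0,\infty))$. Consistency of the family $(S_p)$ dualizes to consistency of $(R_p)$, because for functionals in the common part the two adjoints agree against a dense set of test functions whenever the original semigroups agree. By \cite[Corollary~2.11]{KunWei04}, maximal regularity of $R_p$ is equivalent to maximal regularity of $S_{p'}$, which by the first part holds if and only if $p' \in (1,2]$, i.e.\ if and only if $p \in [2,\infty)$.

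There is essentially no new hard step here; the real content beyond Theorem~\ref{thm:l^p_counterexample} is the positive direction for $p \in (1,2]$, and this is furnished entirely by the unconditionality of $(f_m^{\prime})_{m \in \IN}$ proved above together with Theorem~\ref{thm:unconditionality_implies_maxreg}. The two points that still deserve a line of care are: (i) the applicability of Theorem~\ref{thm:unconditionality_implies_maxreg} on $X^p$, which reduces to the UMD-property of $X^p = \oplus_{\ell^p}^n \ell^2_n$ (immediate, either as an $\ell^p$-sum of finite-dimensional spaces with uniformly bounded UMD constants or via $X^p \simeq \ell^p$); and (ii) the bookkeeping that both consistency and the maximal-regularity equivalence survive the passage to adjoints, which is the same duality argument already used for Corollary~\ref{cor:lp_counterexample+}.
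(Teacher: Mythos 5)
Your proposal is correct and is essentially the paper's intended argument: the paper states this corollary without a separate proof precisely because it follows by combining Proposition~\ref{prop:concrete_bases} (failure of maximal regularity for $p>2$), the unconditionality of $(f_m^{\prime})$ for $p\in(1,2]$ together with Theorem~\ref{thm:unconditionality_implies_maxreg} on the UMD-space $X^p$ (maximal regularity for $p\le 2$), the transference $S_p(z)=i_p\circ T_p(z)\circ i_p^{-1}\oplus\Id$ from Theorem~\ref{thm:l^p_counterexample}, and the adjoint/duality step of Corollary~\ref{cor:lp_counterexample+} for the $[2,\infty)$ alternative. Your two points of care (UMD-property of $X^p$ and consistency surviving dualization) are exactly the right ones and are handled correctly.
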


\section{Schauder bases that are not \texorpdfstring{$\mathcal{R}$}{R}-bases}

In this section we show that our methods also directly give examples of bases that are not $\mathcal{R}$-bases.
Let $(e_m)_{m \in \IN}$ be a Schauder basis for a Banach space $X$. Then one can define projections $P_N: X \to X$ ($N \in \IN$) by
	\[ P_N \biggl(\sum_{m=1}^{\infty} a_m e_m \biggr)  = \sum_{m=1}^N a_m e_m. \]
It follows from the uniform boundedness principle that the projections are always uniformly bounded in operator norm~\cite[Proposition~1.1.4]{AlbKal06}. This motivates the following definition.
	
\begin{definition}
	A Schauder basis $(e_m)_{m \in \IN}$ is called an \emph{$\mathcal{R}$-basis} if the set $\{ P_N: N \in \IN \}$ of projections is $\mathcal{R}$-bounded.
\end{definition}

One can show that each unconditional basis on a UMD-space (this actually holds for spaces with property $(\Delta)$~\cite[Theorem~3.3~(4)]{KalWei01}) is  an $\mathcal{R}$-basis, in particular this holds for $L^p$-spaces for $1 < p < \infty$.

\begin{theorem}
	Let $X$ be a Banach space that admits an unconditional, non-symmetric normalized Schauder basis $(e_m)_{m \in \IN}$. Then there exists a Schauder basis for $X$ that is not an $\mathcal{R}$-basis.
\end{theorem}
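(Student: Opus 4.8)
The plan is to reduce the claim to the machinery already developed for constructing non-$\mathcal{R}$-analytic semigroups, since the two notions are closely linked. The key structural observation is that an $\mathcal{R}$-basis is precisely what makes the associated semigroup construction on $\Rad(X)$ behave well: if $(f_m)_{m \in \IN}$ were an $\mathcal{R}$-basis, then the projections $P_N$ would be $\mathcal{R}$-bounded, and one could transfer the counterexample machinery to derive a contradiction. Concretely, I would start from Proposition~\ref{prop:ce_constructor}, which---using only that $(e_m)_{m \in \IN}$ is unconditional, non-symmetric, and normalized---produces a Schauder basis $(f_m)_{m \in \IN}$ and a multiplier $A$ with $A(\sum a_m f_m) = \sum 2^m a_m f_m$ generating a holomorphic semigroup $(T(z))$ that is not $\mathcal{R}$-bounded on $[0,1]$. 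This $(f_m)_{m \in \IN}$ is my candidate for a basis that is not an $\mathcal{R}$-basis.

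The heart of the argument is then to show that if $(f_m)_{m \in \IN}$ \emph{were} an $\mathcal{R}$-basis, the semigroup $(T(z))$ would have to be $\mathcal{R}$-analytic, contradicting Proposition~\ref{prop:ce_constructor}. The plan is to exploit the fact that, for an $\mathcal{R}$-basis, $\mathcal{R}$-boundedness of a family of Schauder multipliers can be read off from a uniform bound on the multiplier sequences combined with the $\mathcal{R}$-boundedness of the partial-sum projections. More precisely, each semigroup operator $T(z)$ is the multiplier associated to $(e^{-z \cdot 2^m})_{m \in \IN}$, and one can write $T(z)$ through the projections $P_N$ via an Abel-summation (telescoping) argument: the operator is controlled by $\sup_m \abs{e^{-z \cdot 2^m}}$ together with the variation $\sum_m \abs{e^{-z \cdot 2^{m+1}} - e^{-z \cdot 2^m}}$ of the multiplier sequence. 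For $z$ in a small sector around the positive axis this total variation stays bounded uniformly, so the $\mathcal{R}$-boundedness of $\{P_N\}$ would upgrade to $\mathcal{R}$-boundedness of $\{T(z) : z \in \Sigma_\delta, \abs{z} \le 1\}$.

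The main obstacle is making this transference quantitatively precise: one must show that a multiplier $M_\gamma$ associated to a sequence $(\gamma_m)$ of uniformly bounded total variation can be expressed as an $\mathcal{R}$-bounded-coefficient combination of the projections $P_N$, and that the relevant variation sums $\sum_m \abs{e^{-z \cdot 2^{m+1}} - e^{-z \cdot 2^m}}$ are bounded uniformly in $z$ over the truncated sector. The summation-by-parts identity $M_\gamma = \sum_{N} (\gamma_N - \gamma_{N+1}) P_N$ (plus a limiting term) is the right tool: the permanence properties of $\mathcal{R}$-bounds under absolutely convergent sums then give $\R{M_\gamma} \lesssim (\sup_m \abs{\gamma_m} + \sum_m \abs{\gamma_{m+1} - \gamma_m}) \cdot \R{\{P_N\}}$. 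Establishing the uniform variation bound for $\gamma_m = e^{-z \cdot 2^m}$ is the genuinely analytic step, and it should follow by estimating $\abs{e^{-z \cdot 2^{m+1}} - e^{-z \cdot 2^m}} \le \abs{z} 2^m \abs{e^{-z \cdot 2^m}}$ on the increments where $\abs{z} 2^m \lesssim 1$ and using exponential decay to control the tail; the geometric growth of $2^m$ is exactly what keeps this sum bounded independently of $z$ in the sector. Once the transference is in place, the contradiction with Proposition~\ref{prop:ce_constructor} is immediate, and $(f_m)_{m \in \IN}$ is the desired basis that fails to be an $\mathcal{R}$-basis.
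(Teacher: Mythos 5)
Your proposal is correct, but it takes a genuinely different route from the paper's. The paper also starts from the basis $(f_m^{\prime})$ of Proposition~\ref{prop:ce_constructor}, but instead of passing through the semigroup it derives the contradiction directly from the projections: if $\{P_N\}$ were $\mathcal{R}$-bounded, then $\sum_m r_m x_m \mapsto \sum_m r_m P_{2m-1}x_m$ would extend to a bounded operator $\tilde{P}$ on $\Rad(X)$ (this is literally the definition of $\mathcal{R}$-boundedness applied with $T_m = P_{2m-1}$), and since $P_{2m-1}e_{\pi(2m)} = P_{2m-1}(f_{2m}^{\prime} - f_{2m-1}^{\prime}) = -e_{2m-1}$, this $\tilde{P}$ maps the convergent series $\sum_m a_m r_m \otimes e_{\pi(2m)}$ onto $-\sum_m a_m r_m \otimes e_{2m-1}$, whose convergence contradicts the non-equivalence of the two basic sequences exactly as at the end of the proof of Proposition~\ref{prop:ce_constructor}. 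Your argument instead uses the \emph{statement} of Proposition~\ref{prop:ce_constructor} as a black box and proves a transference lemma: with respect to an $\mathcal{R}$-basis, every Schauder multiplier whose symbol has uniformly bounded variation is $\mathcal{R}$-bounded, via Abel summation $M_\gamma = \sum_{N}(\gamma_N - \gamma_{N+1})P_N + \gamma_\infty \Id$ together with the standard permanence properties of $\mathcal{R}$-bounds (absolutely convex hulls and strong closures of $\mathcal{R}$-bounded sets remain $\mathcal{R}$-bounded). Both steps are sound; note, however, that for the contradiction you only need $\mathcal{R}$-boundedness of $\{T(t): t \in (0,1]\}$ for \emph{real} $t$, where the variation bound is trivial by telescoping, $\sum_m \bigl( e^{-t2^m} - e^{-t2^{m+1}} \bigr) = e^{-2t} \le 1$, so your sector estimate, while correct, can be skipped entirely. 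As for what each approach buys: the paper's proof is shorter and needs no permanence machinery, but it leans on the specific internal structure of $(f_m^{\prime})$, namely that $P_{2m-1}$ annihilates $f_{2m}^{\prime}$ and fixes $f_{2m-1}^{\prime}$; yours is modular, would work verbatim for any basis admitting a diagonal multiplier semigroup that fails to be $\mathcal{R}$-bounded on $[0,1]$, and your transference lemma is of independent interest, being precisely the $\mathcal{R}$-basis analogue of Theorem~\ref{thm:unconditionality_implies_maxreg}, with unconditionality weakened to the $\mathcal{R}$-basis property and the $H^\infty$-calculus replaced by bounded-variation multipliers.
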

\begin{proof}
	As in the proof of Proposition~\ref{prop:ce_constructor} we choose $f_m^{\prime}$ respectively $f_m^{\prime \prime}$. Again, we only consider the case of $f_m^{\prime}$. Suppose that $(f_m^{\prime})_{m \in \IN}$ is an $\mathcal{R}$-basis and let $(P_N)_{N \in \IN}$ be the associated projections. Then $\sum_{m=1}^n r_m x_m \mapsto \sum_{m=1}^n r_m P_{2m-1} x_m$ extends to a bounded operator $\tilde{P} \in \mathcal{L}(\Rad(X))$. In particular, we have
		\begin{align*} 
			-\sum_{m=1}^N r_m a_m e_{2m-1} & = -\sum_{m=1}^N r_m a_m f^{\prime}_{2m-1} = \sum_{m=1}^N r_m a_m P_{2m-1} (f_{2m}^{\prime} - f_{2m-1}^{\prime}) \\
			& = \sum_{m=1}^N r_m a_m P_{2m-1} e_{\pi(2m)} = \tilde{P} \biggl( \sum_{m=1}^N r_m a_m e_{\pi(2m)} \biggr).
		\end{align*}
	The boundedness of $\tilde{P}$ implies that the left hand side converges in $\Rad(X)$ whenever $\sum_{m=1}^N r_m a_m e_{\pi(2m)}$ converges for $N \to \infty$, which exactly as in the proof of Proposition~\ref{prop:ce_constructor} yields a contradiction.
\end{proof}

In particular, this applies for the spaces $L^p([0,1])$ $(1 < p \neq 2 < \infty)$ and answers an open problem stated at the end of~\cite{KriLeM10}.

\bibliographystyle{amsalpha}
\bibliography{counterexample_maxreg}{}

\end{document}